\begin{document}

\title[On $F$-pure thresholds and quasi-$F$-purity of hypersurfaces]{On $\mathbf{F}$-pure thresholds and quasi-$\mathbf{F}$-purity of hypersurfaces}
\begin{abstract}
    We show that quasi-$F$-pure but not $F$-pure isolated quasi-homogeneous hypersurface singularities necessarily have $F$-pure threshold $1 - \frac{1}{p}$. This extends work of Bhatt and Singh \cite{BhattSinghThresholds} beyond the Calabi-Yau case. We also classify the (quasi)-$F$-purity of Fermat hypersurfaces. 
\end{abstract}

\author[Garzella]{Jack J Garzella}
\address{Department of Matheamtics, UC San Diego, La Jolla, CA, USA}
\email{jjgarzell@ucsd.edu}

\author[Jagathese]{Vignesh Jagathese}
\address{Department of Mathematics, Statistics, and Computer Science, University of Illinois at Chicago, Chicago, IL, USA}
\email{vjagat2@uic.edu}

\maketitle

\section{Introduction}

The goal of this article is to relate two different methods of measuring the failure of $F$-purity of a hypersurface in positive characteristic: the $F$-pure threshold and the quasi-$F$-pure height. In particular, we show via the $F$-pure threshold (denoted $\fpt(f)$) that quasi-$F$-pure hypersurfaces are nearly $F$-pure in a precise sense.

\begin{MAINTHM}[Theorem \ref{thm: MAIN FPT/QFP}]
    Let $f \in A = k[x_1, \dots, x_n]$ define a quasi-homogeneous isolated singularity,
    where \(k\) is a perfect field of odd characteristic \(p > n-2\).
    If \(A/(f)\) is quasi-\(F\)-pure, then
    \[
    \fpt(f) \geq 1 - \frac{1}{p} 
    .\]
\end{MAINTHM}

Introduced by Mustaţă, Takagi and Watanabe \cite{TakagiWatanabe_OnFPureThresholds, MustataTakagiWatanabeFThresholdsAndBernsteinSato}, The $F$-pure threshold of a hypersurface has proven to be a powerful invariant in positive characteristic geometry. For $f \in k[x_1, \dots, x_n]$, where $k$ is a perfect field of characteristic $p > 0$, $\fpt(f)$ is always a rational number \cite{BlickleMustataSmithDiscretenessAndRationalityOfFThresholds} contained in $(0,1]$ and $\fpt(f) = 1$ if and only if the corresponding quotient singularity $k[x_1, \dots, x_n]/(f)$ is $F$-pure. The lower the $F$-pure threshold is, the further away the hypersurface is from being $F$-pure. The $F$-pure threshold has been computed explicitly for wide classes of examples \cite{MustataTakagiWatanabeFThresholdsAndBernsteinSato, BudurMustataSaitoBernsteinSatoPolynomialsOfArbitrary,
HunekeMustataTakagiWatanabeFThresholdsTightClosureIntClosureMultBounds,
BlickleMustataSmithFThresholdsOfHypersurfaces,
HernandezThesis,
HernandezFPureThresholdOfBinomial,
HernandezFInvariantsOfDiagonalHyp, DeStefaniNunezBetancourtFthresholdGradedrings, 
BhattSinghThresholds, HernandezBetancourtWittEmilyZhang_FPureThresholdsofHomogeneousPolynomials,
KadyrsizovaKenkelPageSinghSmithVraciuWittLowerBoundsonFPureThresholdsandExtremalSingularities} and specific examples can be easily computed via the \verb|FrobeniusThresholds| package \cite{FrobeniusThresholdsMC2} in Macaulay2. \\ 

More recently, Yobuko introduced the notion of quasi-$F$-split singularities \cite{Yobuko_QuasiFrobeniusSplittingandliftingofCYVarsInCharp,Yobuko_OnTheFrobeniusSplittingHeightofVarsinPositiveChar} as a generalization of $F$-splitting.
Rather than requiring the absolute Frobenius $F: \Ox \to F_*\Ox$ split, quasi-$F$-split varieties instead only require the map $\Ox \to F_*(W_n\Ox/p)$ to split for some $n$, where $F_*(W_n\Ox/p)$ is constructed using Witt Vectors.
As $F_*W_1\Ox = F_*\Ox$, this requirement recovers the condition for $F$-splitting, and it weakens as $n$ grows. 
The minimal $n$ for which  $\Ox \to F_*(W_n\Ox/p)$ splits,
is called the \textbf{quasi-$\mathbf{F}$-split height}.
Initially introduced as a global invariant with a view towards Calabi-Yau varieties, quasi-$F$-splitting has proven to be a useful invariant in arithmetic and birational geometry \cite{NakkajimaYobukoKodaira,Kawakami++_2022quasifsplittingsbirationalgeometry,Kawakami++_2022quasifsplittingsbirationalgeometryII,Kawakami++_2022quasifsplittingsbirationalgeometryIII,tanaka2024quasifesplittings}. Quasi-$F$-purity is a local analog of quasi-$F$-splitting; they are equivalent in the setting of this paper \cite{Jagathese_OnQuasiFPurityofExcellentRings} so we use these terms interchangeably.  \\

 In \cite{BhattSinghThresholds}, Bhatt and Singh calculate the F-pure threshold of a smooth Calabi-Yau hypersurface $X = Z(f) \subset \PP^n_k$. If the characteristic of $k$ is strictly greater than $n^2 - n - 1$, they show that 
\[
\fpt(f) = 1 - \frac{a}{p}
\]
where $a = a(X)$ is the $a$-number of a variety in positive characteristic, a generalization of the $a$-number of an abelian variety, defined in full generality by van der Geer and Katsura \cite{vdG-Katsura-2002-invariant}. Van der Geer and Katsura also show that the $a$-number has the following relationship with the so-called Artin-Mazur height $h(X)$ of the hypersurface $f$: if $1 < h(X) < \infty$, then $a(X) = 1$, and if $1 = h(X)$, then $a(X) = 0$. 
Finally, Yobuko in \cite{Yobuko_QuasiFrobeniusSplittingandliftingofCYVarsInCharp} shows that the Artin-Mazur height is equal to the quasi-$F$-split height in this case. Taken together, these statements in the literature prove that, if $\textrm{char}(k) > n^2 - n - 1$ and $X = Z(f) \subset \PP^n_k$ is a smooth quasi-$F$-split Calabi-Yau hypersurface, then
\[
\fpt(f) \geq 1 - \frac{1}{p}
\] 
Via \cite{HernandezBetancourtWittEmilyZhang_FPureThresholdsofHomogeneousPolynomials}, \(1 - 1/p\) is the highest
\(F\)-pure threshold that can occur that is not exactly \(1\) in this setting.  Thus, this statement shows that a quasi-\(F\)-pure Calabi-Yau hypersurface is as close to being \(F\)-pure as possible: either it is just $F$-pure and $\fpt(f) = 1$ or it is not and $\fpt(f) = 1 - 1/p$ is maximal. \\ 

We extend this result to any quasi-homogeneous hypersurface, with a lesser restriction on \(p\); see Theorem \ref{thm: MAIN FPT/QFP} for a more precise statement. Our proof uses only the Fedder-type criterion for quasi-\(F\)-purity described in \cite{kawakami2022fedder} (cf. \cite{yoshikawa2025feddertypecriterionquasifesplittingquasifregularity}) and statements on the discreteness of $F$-pure Thresholds in \cite{HernandezBetancourtWittEmilyZhang_FPureThresholdsofHomogeneousPolynomials}, making
no mention of formal groups, deformation theory, or the Hodge and conjugate filtrations. \\


We caution that this is not an equivalence and provide examples of K3 surfaces which are supersingular (and thus not quasi-$F$-pure), yet have $a$-number $1$. We also apply the quasi-Fedder's criterion to detect (quasi)-$F$-purity of Fermat type hypersurfaces (Theorem \ref{thm: FermatHypersurfaceClassification}) and determine when $F$-purity and quasi-$F$-purity coincide (corollary \ref{cor: FermatHypQFPequivtoFP}).

\section{Preliminaries}
In this section we will provide a brief overview of $F$-purity and $F$-pure thresholds. We will then define Witt Vectors with a view towards introducing quasi-$F$-purity. 
\subsection*{Notation}
\begin{itemize}
    \item All rings are assumed to be commutative, Noetherian, and contain $1$. Associated to any ring $R$ of characteristic $p > 0$, for $p$ a prime, will be the Frobenius morphism $F: R \to F_*R$ assigning $x \mapsto x^p$. $F_*(-)$ denotes restriction of scalars. In this paper we will further assume that $F$ is a finite map of $R$-modules (e.g. every characteristic $p$ ring $R$ is $\mathbf{F}$-\textbf{finite}). 
    \item $k$ will denote an $F$-finite field of characteristic $p > 0$ unless otherwise stated.
    \item In a local ring $R$, $\fm$ will denote the unique maximal ideal. Over any polynomial ring $k[x_1, \dots, x_n]$, $\fm := (x_1, \dots x_n)$ will be the maximal ideal corresponding to the origin. 
    \item For any $f \in k[x_1, \dots, x_n]$, $\fpt(f)$ denotes the $F$-pure threshold of $f$ and $\height(f)$ denotes the quasi-$F$-pure height of $k[x_1, \dots, x_n]/(f)$. See sections \ref{Preliminaries-FPTs} and \ref{Preliminaries-QuasiFPurity} for definitions of these constructions. 
\end{itemize}
Let $R$ be a ring with hypotheses as above. Any map of $R$-Modules $M \to N$ is \textbf{pure} if $L \tensor_R M \to L \tensor_R N$ is injective for any $R$-Module $L$. We say that $R$ itself is $\mathbf{F}$-\textbf{Pure} (resp. $\mathbf{F}$-\textbf{split}) if the associated Frobenius map $F: R \to F_*R$ is pure (resp. is split) as an $R$-module homomorphism. We record some well known facts about $F$-purity and $F$-splitting below: 
\begin{itemize}
    \item $F$-split always implies $F$-pure, with equivalence when $R$ is complete local or $F$-finite.\footnote{Though equivalent in our ($F$-finite) setting, we will refer to rings in this paper as being $F$-pure. This is because $F$-splitting in more general contexts is not a good notion of singularity; indeed over non-$F$-finite fields there exist excellent DVRs which are not $F$-split \cite{DattaMurayamaTate}}
    \item $F$-purity is the characteristic $p$ analogue of having log canonical singularities \cite{HaraWatanabeFRegFPure}, and should be viewed as a 'mild' singularity condition. Indeed, $F$-pure rings are log canonical, and rings with log canonical singularities are conjecturally $F$-pure mod $p$ for infinitely many primes $p$. 
    \item $F$-purity is a local property, completes, and is stable under \'etale extensions and arbitrary base change.  
\end{itemize}
In view of the last point, we can reduce to case where $R$ is complete local when testing for $F$-purity; in particular $R$ can be written as a quotient $R = A/I$ of a regular complete local ring $A$. Here $F$-purity can be checked explicitly by Fedder's Criterion:
\begin{theorem}[Fedder's Criterion \cite{FeddersCrit}]
   Let $(A,\fm)$ be a regular local ring and $I \subset A$ an ideal. Then $A/I$ is $F$-pure if and only if $(I^{[p]}:I) \not\subset \fm^{[p]}$.
\end{theorem}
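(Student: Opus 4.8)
The plan is to make Frobenius splittings of $A/I$ completely explicit, using that $F_*A$ is a free $A$-module when $A$ is regular and $F$-finite. First I would reduce to the case $A=k[[x_1,\dots,x_n]]$ with $k$ an $F$-finite field: $F$-purity coincides with $F$-splitting in the $F$-finite setting, and both the splitting condition and the condition $(I^{[p]}:I)\not\subseteq\fm^{[p]}$ are unaffected by completion, since $\widehat A$ is faithfully flat over $A$, completion commutes with forming $I^{[p]}$, $(I^{[p]}:I)$ and $\fm^{[p]}$, and $J\widehat A\subseteq\fm^{[p]}\widehat A$ iff $J\subseteq\fm^{[p]}$. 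In this case $F_*A$ is free over $A$ with basis $\{F_*x^{\mathbf a}:0\le a_i\le p-1\}$ (tensored with a $k^p$-basis of $k$ if $k$ is imperfect), and I would fix the generator $\Phi\in\Hom_A(F_*A,A)$ of $\Hom_A(F_*A,A)$ as a free rank-one $F_*A$-module: concretely, expanding $g=\sum_{\mathbf a}g_{\mathbf a}^p x^{\mathbf a}$ uniquely, $\Phi(F_*g):=g_{(p-1,\dots,p-1)}$, so that $\Phi(F_*(h^pg))=h\,\Phi(F_*g)$ and every $A$-linear map $F_*A\to A$ has the form $F_*g\mapsto\Phi(F_*(cg))$ for a unique $c\in A$.

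The technical core is the identification
\[
\Hom_{A/I}\!\big(F_*(A/I),\,A/I\big)\;\cong\;F_*\!\big((I^{[p]}:I)/I^{[p]}\big),
\]
under which the class of $c$ corresponds to $\psi_c\colon F_*\overline g\mapsto\overline{\Phi(F_*(cg))}$. I would establish this in three steps: (i) an $A$-linear map out of an $A/I$-module that lands in an $A/I$-module is automatically $A/I$-linear; (ii) applying $\Hom_A(F_*A,-)$ to $0\to I\to A\to A/I\to 0$ preserves exactness because $F_*A$ is flat, so $\Hom_A(F_*A,A/I)=\Hom_A(F_*A,A)/\Hom_A(F_*A,I)$, and under $\Hom_A(F_*A,A)\cong F_*A$ the submodule $\Hom_A(F_*A,I)$ corresponds to $F_*(I^{[p]})$; (iii) a map $F_*A\to A/I$ factors through $F_*A\twoheadrightarrow F_*(A/I)$ iff it kills $F_*I$, which for $\psi_c$ amounts to $cI\subseteq I^{[p]}$, i.e.\ $c\in(I^{[p]}:I)$. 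The single computational lemma feeding (ii) and (iii) is that, for any ideal $J\subseteq A$, one has $\Phi(F_*(cA))\subseteq J$ iff $c\in J^{[p]}$; this is checked on the monomial basis, using the identity $\Phi(F_*(h^pg))=h\,\Phi(F_*g)$ and the remark that $c\in J^{[p]}$ precisely when every coefficient $c_{\mathbf a}$ in the expansion $c=\sum_{\mathbf a}c_{\mathbf a}^p x^{\mathbf a}$ lies in $J$ (itself a consequence of flatness of $F_*A$).

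With this in hand, a map $\psi_c$ splits the Frobenius $A/I\to F_*(A/I)$ iff $\psi_c(F_*\overline 1)=\overline 1$, and since $\psi_c(F_*\overline 1)=\overline{\Phi(F_*c)}$ and $A/I$ is local, such a splitting exists (after rescaling $\psi_c$ by a unit of $A/I$) exactly when $\Phi(F_*c)\notin\fm$ for some admissible $c$. Thus $A/I$ is $F$-pure iff there is some $c\in(I^{[p]}:I)$ whose top coefficient $c_{(p-1,\dots,p-1)}=\Phi(F_*c)$ is a unit. One direction is now immediate: such a $c$ cannot lie in $\fm^{[p]}$ (its top coefficient would be in $\fm$), so $(I^{[p]}:I)\not\subseteq\fm^{[p]}$. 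For the converse, given $c\in(I^{[p]}:I)\setminus\fm^{[p]}$, some coefficient $c_{\mathbf a_0}$ is a unit; replacing $c$ by $x^{(p-1,\dots,p-1)-\mathbf a_0}c$ — which stays in the ideal $(I^{[p]}:I)$ — moves that coefficient into the top slot, since $\Phi(F_*(x^{\mathbf b}c))=c_{(p-1,\dots,p-1)-\mathbf b}$, producing the required element.

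The step I expect to demand the most care is the monomial-basis lemma underlying (ii) and (iii): namely, verifying that $\Phi$ really generates $\Hom_A(F_*A,A)$ freely, that $\Hom_A(F_*A,I)$ corresponds to exactly $F_*(I^{[p]})$ (where flatness of $F_*A$ and the coefficient description of $J^{[p]}$ are both used), and dealing uniformly with an imperfect $F$-finite residue field — for which one either carries a $k^p$-basis of $k$ through all the expansions, or first base changes to reduce to a perfect residue field.
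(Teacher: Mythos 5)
The paper does not prove this statement; it is quoted as a known theorem from Fedder's paper and used as a black box, so there is no internal proof to compare against. Your argument is essentially Fedder's original one, and it is correct. A few remarks on the points you rightly flag as needing care:

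The reduction to $\widehat A = k[[x_1,\dots,x_n]]$ is sound, but it leans on the standing $F$-finiteness hypothesis (stated in the paper's notation section) in two places: to replace $F$-purity by $F$-splitting, and to know that $F_*\widehat A$ is the completion of $F_*A$, so that the splitting and the containment $(I^{[p]}:I) \subseteq \fm^{[p]}$ both transfer along the faithfully flat map $A \to \widehat A$. (The commutation of completion with the colon ideal is fine because $I$ is finitely generated and $\widehat A$ is flat.)

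The monomial-basis lemma is the heart of the matter and you have it right. Two small things worth making explicit if this were written out in full: the identity $\Phi(F_*(x^{\mathbf b}c)) = c_{(p-1,\dots,p-1)-\mathbf b}$ that you invoke in the converse direction holds only when $\mathbf b$ itself has all coordinates in $\{0,\dots,p-1\}$ (otherwise higher-index terms can ``carry'' into the top slot), but this is automatic for your choice $\mathbf b = (p-1,\dots,p-1)-\mathbf a_0$; and the coefficient description of $J^{[p]}$ that feeds steps (ii) and (iii) is exactly the statement that $J\cdot F_*A = F_*(J^{[p]})$, which is where faithful flatness (or just freeness) of $F_*A$ enters. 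Your handling of the imperfect $F$-finite residue field is also correct in outline: one either tensors the monomial basis with a $p$-basis of $k$ throughout, or notes that both sides of the criterion behave well under the faithfully flat base change to a perfect closure.

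So: correct proof, standard route, no gap.
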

When working with a hypersurface (e.g. $I = (f)$ for some $f \in A$) this criterion is even simpler: $A/(f)$ is $F$-pure if and only if $f^{p-1} \not\in \fm^{[p]}$. If $A/(f)$ is not $F$-pure, one can ask how close it is to being $F$-pure.
\subsection{\TOC{$\mathbf{F}$}{F}-Pure Thresholds}\label{Preliminaries-FPTs}
Let $A = k[x_1, \dots, x_n]$. For a polynomial $f \in \fm$ and any $e \geq 1$, we define 
$$\nu_f(p^e) := \max \left\{N \ | \ f^N \not\in \fm^{[p^e]}\right\}$$
Via Fedder's criterion above, if $R:= A/(f)$ is $F$-pure, it can be readily seen that $\nu_f(p) = p - 1$. More generally, for $e \gg 0$, $\nu_f(p^e) \sim p^e$. If $R$ was not $F$-pure, $\nu_f(p^e)$ would be smaller than $p^e$ as $e$ grows. The ratio between these quantities in the limit is precisely the $\mathbf{F}$-\textbf{pure threshold}:
$$\fpt(f) := \lim_{e \to \infty}\frac{\nu_f(p^e)}{p^e} \in (0,1]$$
Though initially introduced in \cite{TakagiWatanabe_OnFPureThresholds} for pairs $(R,\fa)$, this special case first appeared in \cite{MustataTakagiWatanabeFThresholdsAndBernsteinSato} and is all we will need for this paper. We note that $\left\{\frac{\nu_f(p^e)}{p^e}\right\}_e$ forms a non-decreasing sequence, so for any $e \geq 1$,
$$\fpt(f) \geq \frac{\nu_f(p^e)}{p^e} > 0$$
 Importantly $\fpt(f) = 1$ if and only if $R$ is $F$-pure, and heuristically, the lower $\fpt(f)$ is the worse the singularities are at the origin. Just as $F$-pure singularities are related to log canonical singularities in characteristic $0$, there exists an analogous relationship between the $F$-pure threshold and log canonical threshold:
\begin{theorem}[\cite{MustataTakagiWatanabeFThresholdsAndBernsteinSato} Theorem 3.3,3.4]
    Let $f \in \CC[x_1, \dots, x_n]$ be a polynomial with integer coefficients. For a given prime $p$, let $f_p \in \FF_p[x_1, \dots, x_n]$ denote $f$ with coefficients reduced modulo $p$. Then for all primes $p$, $\fpt(f_p) \leq \mathrm{lct}(f)$, and further,
    $$\lim_{p \to \infty} \fpt(f_p) = \mathrm{lct}(f)$$
\end{theorem}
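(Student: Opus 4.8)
The plan is to realize both quantities as the first ``jumping coefficient'' of a nested family of ideals: the multiplier ideals $\cJ(f^c)\subseteq\CC[x_1,\dots,x_n]$ in characteristic zero and the (generalized) test ideals $\tau(f_p^c)\subseteq\FF_p[x_1,\dots,x_n]$ in characteristic $p$. Explicitly, $\mathrm{lct}(f)=\sup\{c>0:\cJ(f^c)=(1)\}$ and $\fpt(f_p)=\sup\{c>0:\tau(f_p^c)=(1)\}$, where $\cJ(f^c)=(1)$ exactly when $c<\mathrm{lct}(f)$ and $\tau(f_p^c)=(1)$ exactly when $c<\fpt(f_p)$ (the latter being a reformulation of the $\nu_f$-description of $\fpt$ given above). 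Thus the theorem becomes the assertion that the first test-ideal jump is bounded above by, and converges to, the first multiplier-ideal jump, and the entire argument runs through the comparison of these two families after reduction modulo $p$.

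To get $\fpt(f_p)\le\mathrm{lct}(f)$ for all $p$, I would spread $f$ out over $\ZZ[1/N]$ and fix a log resolution $\pi$ of the resulting pair, so that $\mathrm{lct}(f)=\min_i\frac{b_i+1}{a_i}$ along the exceptional and strict-transform divisors. For every prime $p\nmid N$ the base change $\pi_p$ is again a log resolution, so the characteristic-$p$ multiplier ideal computed from $\pi_p$ is the reduction of $\cJ(f^c)$ --- in particular $\mathrm{lct}(f_p)=\mathrm{lct}(f)$ --- and combining this with the containment $\tau(f_p^c)\subseteq\cJ_{\FF_p}(f_p^c)$ (the test ideal sits inside the multiplier ideal whenever the relevant resolution is available) yields $\fpt(f_p)\le\mathrm{lct}(f_p)=\mathrm{lct}(f)$. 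The finitely many primes $p\mid N$ I would handle separately, e.g.\ via the resolution-free valuative description $\mathrm{lct}(f_p)=\inf_v A(v)/v(f)$ over divisorial valuations $v$ (with $A(v)$ the log discrepancy): the valuations extracted by $\pi$ reduce modulo $p$ and still show $\mathrm{lct}(f_p)\le\mathrm{lct}(f)$, and one still has $\fpt(f_p)\le\mathrm{lct}(f_p)$; alternatively one can invoke upper semicontinuity of the log canonical threshold in the flat family over $\Spec\ZZ$.

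For the limit, fix rationals $c_k\uparrow\mathrm{lct}(f)$, so that $\cJ(f^{c_k})=(1)$ for every $k$. By the comparison theorem of Hara and Hara--Yoshida, for each $k$ there is a bound $p_k$ with $\tau(f_p^{c_k})=\cJ(f^{c_k})_p=(1)$ for all primes $p\ge p_k$, hence $\fpt(f_p)\ge c_k$ once $p\ge p_k$. A diagonal argument over $k$ then gives $\liminf_{p\to\infty}\fpt(f_p)\ge\mathrm{lct}(f)$, which with the upper bound forces $\lim_{p\to\infty}\fpt(f_p)=\mathrm{lct}(f)$.

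I expect the main obstacle to be the reduction-modulo-$p$ comparison of test and multiplier ideals, which enters in two different ways. For the limit it is the harder inclusion $\tau(f_p^c)\supseteq\cJ(f^c)_p$, which only holds once $p$ is large relative to $c$; the diagonal argument is precisely what converts this $c$-dependent threshold into an unconditional limit. For the all-$p$ inequality the obstacle is controlling the finitely many primes of bad reduction of the chosen resolution, where the clean equality of ideals fails and one must argue without resolution of singularities --- this last point I expect to be the genuinely delicate part.
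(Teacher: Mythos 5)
The paper only cites this theorem from Mustaţă--Takagi--Watanabe and does not prove it, so there is no paper-internal argument to compare against; I will evaluate your sketch on its own terms.

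Your treatment of the limit statement is correct and is essentially the argument in the literature: express both thresholds as first jumping numbers, invoke the Hara--Yoshida type comparison $\tau(f_p^{c}) = \cJ(f^{c})_p$ for $p$ large relative to $c$, and run the diagonal argument over a sequence $c_k \uparrow \mathrm{lct}(f)$. That part is solid.

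For the all-primes inequality, your route via $\tau \subseteq \cJ$ and reduction of a log resolution is heavier than it needs to be, and both of your proposed patches for the finitely many bad primes are on shaky ground. A log canonical threshold over $\FF_p$ is not available in the same form without resolution of singularities in positive characteristic, and the implication $F$\nobreakdash-pure $\Rightarrow$ log canonical (which underlies $\fpt \leq \mathrm{lct}$ in char $p$) itself requires a resolution; meanwhile ``upper semicontinuity of lct over $\Spec\ZZ$'' is not a statement you will find in the literature. The good news is that the inequality admits a short direct argument that avoids all of this. Fix any divisorial valuation $v$ on $\QQ(x_1,\dots,x_n)$ with $v(x_i) \geq 0$ for all $i$ and $v(f) > 0$. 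Since $\mathrm{lct}(x_1\cdots x_n) = 1$, one has $A(v) \geq v(x_1 \cdots x_n) = \sum_i v(x_i)$. Now if $f_p^{\nu} \notin \fm^{[p^e]}$, then $f^{\nu}$, regarded as a polynomial over $\ZZ$, has some monomial $x^{\beta} = x_1^{\beta_1}\cdots x_n^{\beta_n}$ with every $\beta_i < p^e$ whose coefficient is a $p$\nobreakdash-adic unit, in particular nonzero over $\QQ$. Applying $v$ gives
\[
\nu \cdot v(f) \;=\; v(f^{\nu}) \;\leq\; v(x^{\beta}) \;=\; \sum_i \beta_i\, v(x_i) \;<\; p^e \sum_i v(x_i) \;\leq\; p^e A(v),
\]
so $\nu_{f_p}(p^e)/p^e < A(v)/v(f)$ for every such $v$. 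Taking the infimum over $v$ and then $e \to \infty$ yields $\fpt(f_p) \leq \mathrm{lct}(f)$ for \emph{every} prime $p$, with no resolution theory, no test-ideal/multiplier-ideal comparison, and no spreading out. This is the piece your outline should replace: the rest of it stands.
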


\subsection{Witt Vectors}
In order to define quasi-$F$-purity, we first provide a very brief overview of Witt Vectors. The authors stress that readers not familiar with Witt Vectors can just take the Fedder's Type Criterion of \cite{kawakami2022fedder}, which will be stated in the following section, to be the definition of quasi-$F$-purity for simplicity. We will not explicitly need to use properties of Witt Vectors for any proofs in this paper. For a more thorough treatment of Witt Vectors, the authors recommend \cite{Borger}, \cite[Appendix]{LangerZink}, \cite{DavisKedlaya}, and \cite{LenstraWittVectors}. \\

To any commutative ring $R$ we can associate a ring of ($p$-typical) \textbf{Witt Vectors} 
$$W(R) = \{(\alpha_0,\alpha_1, \dots, ) \ | \ \alpha_i \in R\}$$
We will typically denote elements of $W(R)$ as $\alpha = (\alpha_0,\alpha_1, \dots)$. For any $r \in R$ the element $[r] := (r,0,\dots) \in W(R)$ denotes the \textbf{lift} of $r$ to $W(R)$. Attached to $W(R)$ are addition and multiplication operations defined via universal Witt Polynomials $S_\bullet(\alpha,\beta)$ and $P_\bullet(\alpha,\beta)$ such that
 $$\alpha + \beta = (S_0(\alpha_0,\beta_0), S_1(\alpha_0,\alpha_1,\beta_0,\beta_1), \dots, S_n(\alpha_{\leq n}, \beta_{\leq n}),\dots )$$
 $$\alpha \cdot \beta = (P_0(\alpha_0,\beta_0), P_1(\alpha_0,\alpha_1,\beta_0,\beta_1), \dots, P_n(\alpha_{\leq n}, \beta_{\leq n}),\dots )$$
$S_0(\alpha_0,\beta_0) = \alpha_0 + \beta_0$ and $P_0(\alpha_0,\beta_0) = \alpha_0\beta_0$ as expected, but higher order Witt polynomials are far more complex. As these are polynomial relations, one easily sees that $W(-)$ is functorial: for any ring morphism $\varphi: R \to S$ one can define a ring morphism $W(\varphi): W(R) \to W(S)$ assigning $\alpha \mapsto (\varphi(\alpha_0),\varphi(\alpha_1), \dots)$. $W(R)$ has the following associated maps: 
\begin{itemize}
    \item \textbf{Frobenius}: Witt rings admit a Witt-Frobenius $F: W(R) \to W(R)$. When $R$ is characteristic $p > 0$, this is simply the image of $F:R \to F_*R$ under the functor $W(-)$ assigning $F(\alpha) = (\alpha_0^p,\alpha_1^p,\dots)$. When $R$ is $F$-finite and Noetherian, the Witt Frobenius is a finite map of $W_n(R)$-modules \cite[lemma 2.5]{kawakami2022fedder}.\footnote{The authors stress that the Witt Frobenius is not the same as the ''standard'' Frobenius $\alpha \mapsto \alpha^p$, which is not a ring map on $W(R)$.}
    \item \textbf{Verschiebung}: $V: W(R) \to W(R)$ assigning $(\alpha_0,\alpha_1, \dots) \mapsto (0,\alpha_0,\alpha_1, \dots)$. 
\end{itemize}
We define $W_n(R) := W(R)/\im(V^n)$ to be the ring of \textbf{$\mathbf{n}$-truncated $\mathbf{p}$-typical Witt Vectors}, with similar additional and multiplication operations and associated maps $V,F,[-]$. One easily sees from this definition (along with the definitions of $S_0,P_0$) that $W_1(R) \isom R$. These also have an associated restriction map  $\fR: W_{n+1}(R) \to W_{n}(R)$ assigning $(\alpha_0, \dots, \alpha_n) \mapsto (\alpha_0, \dots, \alpha_{n-1})$. $F$ and $\fR$ are ring homomorphisms while $V$ is additive but not multiplicative. We note $p \in W_n(R)$ is of the form $(0,1,0, \dots 0)$ and the map $p: \alpha \mapsto p \cdot \alpha$, when $R$ is characteristic $p > 0$, yields the identity $p = FV = VF$. \\ 

Let $\ov{W_n}(R) := W_n(R)/p$ be the mod $p$ reduction of $W_n(R)$. $F_*\ov{W_n}(R)$ is an $R$-module via the action $r \cdot F_*\ov{\alpha} := F_*\ov{\left([r^p] \cdot \alpha\right)}$, and is a $W_n(R)$-Module in the expected way. $R$ is also a $W_n(R)$ module via the restriction map $W_n(R) \xto{\fR^{n-1}} R$. \\

\subsection{Quasi-\TOC{$\mathbf{F}$}{F}-purity}\label{Preliminaries-QuasiFPurity}
From the previous section, we can see that the Frobenius $R \to F_*R$ factors through $F_*\ov{W_n}(R)$ for any $n$, yielding the following sequence of $R$-Modules:
$$R \xto{r \mapsto F_*\ov{(r^p,0, \dots, 0)}} F_*\ov{W_n}(R) \xto{\fR} \dots \xto{\fR} F_*\ov{W_2}(R) \xto{\fR}F_*\ov{W_1}(R) = F_*R$$
From this diagram, on easily sees that if $R \to F_*R$ is pure (resp. splits) then $R \to F_*\ov{W_n}(R)$ is pure (resp. splits) for all $n \geq 1$. Thus, the purity (resp. splitting) of $R \to F_*\ov{W_n}(R)$ can be seen as a weakening of $F$-purity (resp. $F$-splitting), with the condition weakening further as $n$ grows. This motivates the following definition: \\

We say that $R$ is \textbf{$\mathbf{n}$-quasi-$\mathbf{F}$-pure} (resp. \textbf{$\mathbf{n}$-quasi-$\mathbf{F}$-split}) if $R \to F_*\ov{W_n}(R)$ is a pure (resp. split) map of $R$-modules. If $R$ is $n$-quasi-$F$-pure (resp. $n$-quasi-$F$-split) for some $n$, then we say $R$ is quasi-$F$-pure (resp. quasi-$F$-split). Similarly to $F$-pure and $F$-split singularities, $n$-quasi-$F$-split implies $n$-quasi-$F$-pure, with equivalence when $R$ is complete local or $F$-finite \cite{Jagathese_OnQuasiFPurityofExcellentRings}. As we are in the $F$-finite setting, we can use these terms interchangeably. \\ 

$1$-quasi-$F$-purity is equivalent to $F$-purity, and from our discussion above, it is clear that $n$-quasi-$F$-pure implies $(n+1)$-quasi-$F$-pure. Thus we define $\height(R)$, the \textbf{quasi-$\mathbf{F}$-pure height}, to be the minimal $n$ for which $R$ is $n$-quasi-$F$-pure. $\height(R) = 1$ if and only if $R$ is $F$-pure, and $\height(R) = \infty$ if $R$ is not $n$-quasi-$F$-pure for any $n$. As we are particularly concerned with hypersurfaces, for $f \in A$ (with $A$ a regular ring) we denote $\height(f) := \height(A/(f))$ for simplicity. \\ 

Quasi-$F$-purity was inspired by (global) quasi-$F$-splittings introduced by Yobuko \cite{Yobuko_QuasiFrobeniusSplittingandliftingofCYVarsInCharp}. Quasi-$F$-splitting has proven to be a powerful invariant, detecting arithmetic properties like supersingularity \cite{Yobuko_QuasiFrobeniusSplittingandliftingofCYVarsInCharp,Yobuko_QuasiFSplitandHodgeWitt}, extending the Minimal Model Program in positive characteristic \cite{Kawakami++_2022quasifsplittingsbirationalgeometry,Kawakami++_2022quasifsplittingsbirationalgeometryII,Kawakami++_2022quasifsplittingsbirationalgeometryIII}, and governing degeneration of the Hodge-de-Rham spectral sequence \cite{Petrov_DecompositionofDeRhamComplexForQuasiFSplitVarieties}. Quasi-$F$-purity is the local analogue of quasi-$F$-splitting. Much like $F$-purity, for excellent local rings, quasi-$F$-purity completes, localizes, and is stable under local or finite \'etale extension \cite{Jagathese_OnQuasiFPurityofExcellentRings}. 

\section{Main Result}
In this section we develop the machinery necessary to state the Fedder's style criterion for quasi-$F$-purity in \cite{kawakami2022fedder}, similarly to section 1.1 in their paper. We then use this criterion to compute the $F$-pure threshold of a quasi-$F$-pure hypersurface singularity. \\ 

For simplicity, we take our regular ring $A$ to be the localized polynomial ring $k[x_1, \dots, x_n]_\fm$. As quasi-$F$-purity localizes and completes, this is not a significant simplification. $F_*A$ is a free $A$ module with a basis consisting of monomials $F_*x_1^{i_1}x_2^{i_2}\dots x_n^{i_n}$, where each $i_j \leq p-1$. Further, the dual space $\Hom_A(F_*A,A)$ is principally generated as a $F_*A$-module by the dual basis vector $u$ of $F_*(x_1x_2\dots x_n)^{p-1}$. Explicitly, 
$$\Hom_A(F_*A,A) \isom F_*A \cdot u \qquad u(F_*b) = \begin{cases} 1 & b = F_*(x_1x_2\dots x_n)^{p-1} \\ 0 & \textrm{ else} \end{cases}$$
Where $F_*b$ denotes an arbitrary $A$-basis element of $F_*A$, viewed as a free $A$-module. The $u$ map will be used in the statement of the criterion, as will a certain map $\Delta_1: A \to A$ which we will now define. For any element $a \in A$, we can write $a = \sum a_i M_i$ as a $k$-linear sum of monomials $M_i = \prod_{j=1}^n x_j^{i_j}$. In $W_2(A)$, $(a,0)$ and $\sum (a_i M_i,0)$ do not differ in the $0$th coordinate, but do differ in the 1st coordinate; $\Delta_1(a)$ measures this difference. In particular, for a given $a \in A$ we define $\Delta_1(a) \in A$ to be the unique value that satisfies the equation $(0,\Delta_1(a)) = (a,0) - \sum (a_iM_i,0)$. Using the universal Witt polynomials, one obtains a more explicit description: 
$$\Delta_1(a) := \frac{\left(\sum a_i M_i\right)^p - \sum (a_iM_i)^p}{p} = \sum_{0 \leq \alpha_1, \dots, \alpha_m \leq p-1, \sum \alpha_j = p}\frac{1}{p}\binom{p}{\alpha_1, \dots, \alpha_m}\prod_{i=1}^m(a_iM_i)^{\alpha_i}$$
As $p$ divides $\binom{p}{\alpha_1, \dots, \alpha_m}$ for any valid choice of $(\alpha_j)$, this division by $p$ is purely formal. This is all we will need to state the theorem for hypersurfaces $A/(f)$. The authors of \cite{kawakami2022fedder} actually obtained a more general criterion for complete intersections, see \cite[Theorem 4.11]{kawakami2022fedder}. We state the criterion for principal quotients of polynomial rings below.
\begin{theorem}[Fedder Type Criterion for Quasi-$F$-Purity \cite{kawakami2022fedder}] \label{thm: FeddersTypeCrit}
    Let $A = k[x_1, \dots, x_n]_\fm$ and pick $f \in A$. Define $\theta: F_*S \to S$ to be the map
    $$\theta(F_*a) := u\left(F_*\Delta_1(f^{p-1})a\right)$$
    Letting $I_1 := (f^{p-1})$, we define the increasing sequence of ideals $\{I_m\}_m$ inductively as follows:
    $$I_{m+1} := \theta\left(I_m \cap \ker(u)\right) + (f^{p-1})$$
    Then
    $$\height(f) = \inf\{m \ | \ I_m \not\subset \fm^{[p]}\}$$
\end{theorem}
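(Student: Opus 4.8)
The plan is to imitate the proof of the ordinary Fedder criterion, handling one truncated Witt coordinate at a time. Fix $S=A=k[x_1,\dots,x_n]_\fm$ and $R=A/(f)$. The starting observation is purely formal: for a finitely generated $R$-module $M$, a map $\varphi\colon R\to M$ of $R$-modules splits if and only if the image of $\varphi^{\ast}\colon\Hom_R(M,R)\to\Hom_R(R,R)=R$ is not contained in $\fm$ (since $R$ is local, a proper ideal lies in $\fm$). Applying this to $\varphi=\Phi_{R,m}\colon R\to F_*\ol{W_m}(R)$, we get that $R$ is $m$-quasi-$F$-pure if and only if there is an $R$-linear $\psi\colon F_*\ol{W_m}(R)\to R$ with $\psi\bigl(\Phi_{R,m}(1)\bigr)$ a unit. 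So the whole problem is to understand, modulo $\fm$, the map $\Phi_{R,m}^{\ast}\colon\Hom_R(F_*\ol{W_m}(R),R)\to R$, $\psi\mapsto\psi\bigl(\Phi_{R,m}(1)\bigr)$.

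The first substantive step is to transport this to the regular ring $A$ via duality. Since $A$ is regular local and $F$-finite, $\Hom_A(F_*A,A)=F_*A\cdot u$, and more generally $\Hom_A(F^e_*A,A)$ is free of rank one on the ``top monomial'' trace $u_e$; restricting such maps to $R$ turns the condition ``the image lands in $(f)$'' into a colon-ideal condition exactly as in ordinary Fedder. Using the surjection $\ol{W_m}(A)\twoheadrightarrow\ol{W_m}(R)$ together with the restriction maps $\fR$, one filters $\ol{W_m}(R)$ by sub-$R$-modules whose successive quotients are (Frobenius twists of) explicit $R$-modules built from $R$; feeding this filtration through $\Hom_R(F_*(-),R)$ and the trace $u$ expresses an arbitrary $\psi$, modulo $\fm^{[p]}$, by a tuple of ``coefficients'' in $F_*A$, the last of which decides whether $\psi\bigl(\Phi_{R,m}(1)\bigr)$ is a unit precisely by the nonvanishing ``$\not\subset\fm^{[p]}$''. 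The admissible last coefficients form an ideal of $A$, and the goal becomes: this ideal equals $I_m$.

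This is proved by induction on $m$. The base case is Fedder's criterion applied to $(f)$: the admissible coefficients at level $1$ are $\bigl((f)^{[p]}:(f)\bigr)=(f^{p-1})=I_1$, consistent with $\height(f)=1\iff f^{p-1}\notin\fm^{[p]}\iff R$ is $F$-pure. For the inductive step one must extend a partial $R$-linear map on $F_*\ol{W_m}(R)$ over the next layer of the filtration to a map on $F_*\ol{W_{m+1}}(R)$. The obstruction to doing so is read off from the Witt addition law: passing from $\ol{W_m}$ to $\ol{W_{m+1}}$ introduces exactly the degree-one ``carry'' term, which for the element $f^{p-1}$ is, by definition, $\Delta_1(f^{p-1})$. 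Carrying this through, a level-$m$ coefficient $g$ produces the level-$(m+1)$ coefficient $\theta(F_*g)=u\bigl(F_*(\Delta_1(f^{p-1})\,g)\bigr)$, it is usable to build an honest map on $F_*\ol{W_{m+1}}(R)$ precisely when $g\in\ker u$ (this is the compatibility that makes the level-$m$ data descend through the relevant quotient), and one may always add back anything in the hypersurface Fedder ideal $(f^{p-1})$. This gives $I_{m+1}=\theta(I_m\cap\ker u)+(f^{p-1})$, and combined with the splitting criterion of the first two paragraphs yields $\height(f)=\inf\{m:I_m\not\subset\fm^{[p]}\}$, with the convention $\inf\emptyset=\infty$ when $R$ is not quasi-$F$-pure.

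I expect the inductive step to be the main obstacle: one has to do the truncated-Witt-vector bookkeeping honestly and check that $(i)$ $\theta$, with its $\Delta_1(f^{p-1})$ twist, is exactly the transition operator between consecutive Witt levels, and $(ii)$ intersecting with $\ker u$ is exactly the condition under which a partial splitting extends by one coordinate. This means interleaving iterated Fedder duality with the explicit formulas for multiplication, Frobenius, and Verschiebung on $W_m(A)$ modulo $p$, and matching the combinatorial definition of $\Delta_1$ against the universal Witt polynomials $S_\bullet$; the delicate reductions modulo $p$ — the same ones that make $\Phi_{R,m}$ additive in the first place — are where the care is needed.
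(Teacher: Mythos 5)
This theorem is a quoted result: the paper cites it to \cite{kawakami2022fedder} and gives no proof, so there is no in-paper argument to compare your attempt against. (You also silently corrected a typo: the theorem's $F_*S\to S$ should read $F_*A\to A$.) On its merits, your sketch does line up with the high-level shape of the argument in \cite{kawakami2022fedder}: reduce $m$-quasi-$F$-splitting to a unit-hitting condition on $\Hom_R\bigl(F_*\ol{W_m}(R),R\bigr)$, transport to the regular ring $A$ via the generator $u$ of $\Hom_A(F_*A,A)$, filter $\ol{W_m}$ by Verschiebung layers, and induct on $m$. However, the two things you flag as "the main obstacle" --- that $\theta$ twisted by $\Delta_1(f^{p-1})$ is exactly the transition operator between consecutive Witt levels, and that intersecting with $\ker u$ is exactly the extendability condition --- are not side-checks, they \emph{are} the theorem. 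In the source these require substantial $W_m(A)$-module bookkeeping (among other things: why $\Delta_1$ is taken of $f^{p-1}$ rather than of $f$; why the recursion adds back $(f^{p-1})$ at every stage and not only at $m=1$; and whether $\ker u$ acts as an extendability obstruction or as a normalization in the dual description, which is exactly the sort of direction-of-implication issue your outline leaves open). As written, your proposal asserts the conclusion of the inductive step rather than proving it, so it should be read as a correct high-level plan for reproving the cited theorem, not as a proof.
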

We utilize the convention that the infimum of the empty set is $\infty$. If $f$ is homogeneous with respect to an $\NN$-grading on $A$, we can replace $A$ with $k[x_1, \dots, x_n]$ (i.e. we don't need to assume that $A$ is local, see \cite[corollary 5.4]{kawakami2022fedder}). This theorem recovers the standard Fedder's criterion and can detect the exact quasi-$F$-pure height of a given hypersurface. Though the statement looks cumbersome at first glance, it is suitable for implementation in a computer algebra system using GPU acceleration techniques (see \cite{BatubaraGarzellaPan_K3SurfacesofanyArtinMazurHeightoverF5andF7}, \cite{fgmqt-2025-witt-vectors-macaulay2}) and has been used to compute many examples of quasi-$F$-pure hypersurfaces. Using their theorem, the authors obtained a useful criterion for determining when a hypersurface is not non-Quasi-$F$-pure.
\begin{corollary}[\cite{kawakami2022fedder}, corollary 4.19]
\label{cor: QuasiFedderCriterionNotQFPure}
    Let $A = k[x_1, \dots, x_n]_\fm$ and $f \in A$. 
    \begin{enumerate}[label = (\alph*)]
        \item If $f^{p-2} \in \fm^{[p]}$, then $\height(f) = \infty$.
        \item If $f^{(p+1)(p-2)}\Delta_1(f) \subset \fm^{[p^2]}$ for some representative $\Delta_1(f) \in A$ and $f^{p-1} \in \fm^{[p]}$, then $\height(f) = \infty$. 
    \end{enumerate}
\end{corollary}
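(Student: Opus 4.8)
The plan is to prove the two parts of Corollary~\ref{cor: QuasiFedderCriterionNotQFPure} directly from Theorem~\ref{thm: FeddersTypeCrit}, by showing that under each hypothesis every ideal $I_m$ in the inductively defined sequence is contained in $\fm^{[p]}$, so that the infimum defining $\height(f)$ is taken over the empty set and hence equals $\infty$.

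For part~(a), I would argue by induction on $m$ that $I_m \subseteq \fm^{[p]}$. Unwinding the recursion, $I_{m+1} = \theta(I_m \cap \ker u) + (f^{p-1})$, and the map $\theta$ is $F_*a \mapsto u(F_*\Delta_1(f^{p-1})a)$. The key observation is a degree/divisibility bookkeeping: $\Delta_1(f^{p-1})$ is (formally) $\frac{1}{p}$ times a sum of products of $p$-th-power-weighted pieces of the monomial expansion of $f^{p-1}$, so each such term is divisible by a high power of $f$ — roughly, every monomial summand of $\Delta_1(f^{p-1})$ is divisible by $f^{p-2}$ up to lower-order corrections, since $f^{p-1}$ splits as a sum of at least two nontrivial pieces each carrying a factor of $f$ to a large power after raising to weights summing to $p$. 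Combined with $f^{p-2} \in \fm^{[p]}$ and the fact that $\fm^{[p]}$ is closed under multiplication by $\ker u$-elements and survives application of $\theta$ (because $u$ sends $F_*(\fm^{[p]} \cdot A)$ into $\fm$, and one needs the sharper statement that it lands in $\fm^{[p]}$ here, using that $u$ is the dual basis vector to $F_*(x_1 \cdots x_n)^{p-1}$), one gets $I_m \subseteq \fm^{[p]}$ for all $m$. The base case $I_1 = (f^{p-1}) = (f \cdot f^{p-2}) \subseteq (f^{p-2}) \subseteq \fm^{[p]}$ is immediate.

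For part~(b), the hypothesis is weaker — we only control $f^{p-1} \in \fm^{[p]}$ (so $I_1 \subseteq \fm^{[p]}$ and $\height(f) \neq 1$) together with $f^{(p+1)(p-2)}\Delta_1(f) \in \fm^{[p^2]}$. Here I would compute $I_2 = \theta(I_1 \cap \ker u) + (f^{p-1})$ more explicitly. One needs a relation expressing $\Delta_1(f^{p-1})$ in terms of $f$ and $\Delta_1(f)$ — something like $\Delta_1(f^{p-1}) \equiv f^{(p-1)p - (p-1)}\cdot(\text{stuff}) + (\text{multiple of }\Delta_1(f)\text{ times a power of }f) \bmod p$, coming from the additivity/Leibniz-type behaviour of $\Delta_1$ on Witt vectors (the identity $(a,0)^{p-1}$ expanded in $W_2$). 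Feeding this through $\theta$ and $u$, and using $f^{p-1} \in \fm^{[p]}$ to kill the first batch of terms while using $f^{(p+1)(p-2)}\Delta_1(f) \in \fm^{[p^2]}$ to kill the second (the exponent $(p+1)(p-2)$ is exactly what is needed to match the power of $f$ produced in the $I_2$ computation, after a $u$-application drops the Frobenius degree by one level), shows $I_2 \subseteq \fm^{[p]}$. Then an induction as in part~(a) propagates $I_m \subseteq \fm^{[p]}$ for all $m \geq 2$, giving $\height(f) = \infty$.

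The main obstacle I anticipate is the bookkeeping in the second step of part~(b): one must get the precise combinatorial identity relating $\Delta_1(f^{p-1})$ to $f$ and $\Delta_1(f)$ right, including which formal-$\frac{1}{p}$ binomial coefficients survive mod $p$, and then track exponents of $f$ carefully through one application of $\theta$ (which involves multiplication by $\Delta_1(f^{p-1})$ inside $F_*$ followed by the Frobenius-degree-dropping map $u$) to confirm that $(p+1)(p-2)$ and $\fm^{[p^2]}$ are exactly the right thresholds. The role of $u$ being the dual to the top monomial $F_*(x_1\cdots x_n)^{p-1}$ — so that it interacts correctly with Frobenius powers of the maximal ideal — will need to be invoked carefully; a convenient auxiliary lemma to isolate is that $u\bigl(F_*(\fm^{[p^2]} \cdot A)\bigr) \subseteq \fm^{[p]}$, which I would prove by a direct monomial computation.
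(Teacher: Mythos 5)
The paper does not prove this statement at all: it is cited verbatim as \cite{kawakami2022fedder}, Corollary~4.19, and is used as a black box in the rest of the article. So there is no internal proof to compare against, and the honest options are to cite it or to give a genuinely self-contained argument.

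Taking your sketch on its own terms, the overall plan — use Theorem~\ref{thm: FeddersTypeCrit} and show $I_m\subseteq\fm^{[p]}$ for every $m$, so the infimum is over the empty set — is the natural one, and your auxiliary observation that $u\bigl(F_*(\fm^{[p^2]}\cdot A)\bigr)\subseteq\fm^{[p]}$ is correct: it follows from $F_*(\fm^{[p^2]}A)=\fm^{[p]}\cdot F_*A$ together with $A$-linearity of $u$. But the inductive step has a genuine gap. You assert that ``every monomial summand of $\Delta_1(f^{p-1})$ is divisible by $f^{p-2}$ up to lower-order corrections.'' That is not so: by the displayed formula, a summand of $\Delta_1(f^{p-1})$ is a product $\prod_i(a_iM_i)^{\alpha_i}$ with $\sum\alpha_i=p$, $0\le\alpha_i\le p-1$, where the $M_i$ are individual monomials appearing in the expansion of $f^{p-1}$; individual monomials of $f^{p-1}$ are not divisible by $f$, so these products carry no usable power of $f$. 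Nor is it true that $g\in\fm^{[p]}$ forces $\Delta_1(g)\in\fm^{[p^2]}$: for $g=x_1^p+\dots+x_p^p$ (when $n\ge p$) the term with all $\alpha_i=1$ contributes $(p-1)!\,(x_1\cdots x_p)^p\equiv-(x_1\cdots x_p)^p$, which is not in $\fm^{[p^2]}$. So one cannot conclude $\theta(I_m\cap\ker u)\subseteq\fm^{[p]}$ merely from $I_m\subseteq\fm^{[p]}$ plus your auxiliary lemma; the argument must exploit structure of $\Delta_1(f^{p-1})$ specific to the hypothesis $f^{p-2}\in\fm^{[p]}$, and your sketch does not supply that mechanism.

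For part~(b) you correctly diagnose that the crux is a Leibniz-type congruence relating $\Delta_1(f^{p-1})$ to $\Delta_1(f)$ and powers of $f$ (it comes from comparing $[f]^{p-1}$ with $[f^{p-1}]$ in $W_2$, using $V(x)\cdot[a]=V(xa^p)$ and $V(x)V(y)=0$), and that the exponent $(p+1)(p-2)$ and the ideal $\fm^{[p^2]}$ should drop out of that computation combined with one application of $\theta$ and $u$. But you leave this identity at the level of ``something like,'' and it is precisely here that the proof lives: without pinning the congruence down, including the coefficient and the exact power of $f$ modulo which it holds, the sketch does not establish the bound. In short, the framework is right, the auxiliary lemma is right, but the two load-bearing claims — divisibility of $\Delta_1(f^{p-1})$ by powers of $f$, and the Leibniz identity — are respectively false as stated and not proved.
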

Corollary \ref{cor: QuasiFedderCriterionNotQFPure}(a) immediately implies that, if $A/f$ is quasi-$F$-pure, $f^{p-2} \not\in \fm^{[p]}$. This yields the inequality $\fpt(f) \geq \nu_f(p)/p \geq 1 - \frac{2}{p}$. We will now use \ref{cor: QuasiFedderCriterionNotQFPure}(b) to obtain a sharper bound. 
\begin{theorem} \label{thm: MAIN FPT/QFP}
    Let $A = k[x_1, \dots, x_n]$ and $f \in A$ define an isolated hypersurface singularity or be quasi-homogeneous.  If $A/(f)$ is quasi-$F$-pure, then $\fpt(f) \geq 1 - \frac{1}{p} - \frac{2}{p^2}$. If we assume $f$ is an isolated hypersurface singularity AND quasi-homogeneous, and in addition that $p > n-2$, $p \neq 2$, and that $k$ is perfect, then $\fpt(f) \geq 1 - \frac{1}{p}$.
\end{theorem}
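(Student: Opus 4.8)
The plan is to reduce to the nontrivial case and then apply Corollary~\ref{cor: QuasiFedderCriterionNotQFPure} twice, controlling $\nu_f(p)$ with part~(a) and $\nu_f(p^2)$ with part~(b), after which a discreteness statement for $F$-pure thresholds closes the remaining gap. First I would dispose of the trivial cases: if $f\notin\fm^2$ then $A/(f)$ is regular at the origin, hence $F$-pure, and $\fpt(f)=1$; and if $f^{p-1}\notin\fm^{[p]}$ then $A/(f)$ is $F$-pure by Fedder's criterion, so again $\fpt(f)=1$. Thus I may assume $f\in\fm^2$ and $f^{p-1}\in\fm^{[p]}$, so that $\nu_f(p)\le p-2$, $A/(f)$ is not $F$-pure, and, because $A/(f)$ is quasi-$F$-pure, $2\le\height(f)<\infty$.

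For the first (weaker) assertion: since $\height(f)<\infty$ and $f^{p-1}\in\fm^{[p]}$, the contrapositive of Corollary~\ref{cor: QuasiFedderCriterionNotQFPure}(b) yields $f^{(p+1)(p-2)}\Delta_1(f)\notin\fm^{[p^2]}$ for every representative $\Delta_1(f)$. As $\fm^{[p^2]}$ is an ideal and $f^{(p+1)(p-2)}$ divides this element, membership of $f^{(p+1)(p-2)}$ in $\fm^{[p^2]}$ would force membership of the product; hence $f^{(p+1)(p-2)}\notin\fm^{[p^2]}$, i.e. $\nu_f(p^2)\ge(p+1)(p-2)=p^2-p-2$, and therefore $\fpt(f)\ge\nu_f(p^2)/p^2\ge 1-\tfrac1p-\tfrac2{p^2}$. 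Only $\height(f)<\infty$ was used here, which is all the theorem's first hypothesis provides.

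To upgrade this to $\fpt(f)\ge 1-\tfrac1p$ under the extra hypotheses ($f$ a quasi-homogeneous isolated singularity, $p>n-2$, $p$ odd, $k$ perfect), I would first note that Corollary~\ref{cor: QuasiFedderCriterionNotQFPure}(a) additionally forces $f^{p-2}\notin\fm^{[p]}$, so together with $f^{p-1}\in\fm^{[p]}$ we get $\nu_f(p)=p-2$ exactly. Since $p$ is odd we have $1-\tfrac1p-\tfrac2{p^2}>1-\tfrac2p$, so the previous paragraph places $\fpt(f)$ strictly above $1-\tfrac2p$ and (as $A/(f)$ is not $F$-pure) at most $1-\tfrac1p$. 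The plan is then to rule out the half-open window $\bigl[1-\tfrac1p-\tfrac2{p^2},\,1-\tfrac1p\bigr)$ using the discreteness results of \cite{HernandezBetancourtWittEmilyZhang_FPureThresholdsofHomogeneousPolynomials} for (quasi-)homogeneous polynomials, under the hypotheses $p>n-2$ and $k$ perfect that make that machinery available; combined with $\nu_f(p)=p-2$ and the quasi-$F$-purity of $A/(f)$, these exclude $\fpt(f)<1-\tfrac1p$, giving $\fpt(f)\ge 1-\tfrac1p$ (and in fact $\fpt(f)\in\{1,\,1-\tfrac1p\}$, since discreteness also empties the interval $(1-\tfrac1p,1)$).

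The hard part is this last step, and a purely threshold-level input will not suffice: $\FF_5[x,y]/(x^2+y^4)$ is a quasi-homogeneous isolated singularity lying in the class above with $\fpt=\tfrac34\in(1-\tfrac25,\,1-\tfrac15)$, which escapes the conclusion only because it is not quasi-$F$-pure. So the quasi-$F$-purity hypothesis must re-enter. I expect the cleanest way is to iterate the $\fm^{[p^2]}$-level estimate of Corollary~\ref{cor: QuasiFedderCriterionNotQFPure}(b) — or the ideals $I_m$ of Theorem~\ref{thm: FeddersTypeCrit} directly — to bound $\nu_f(p^e)$ for $e\ge 3$ and push $\fpt(f)$ to within $\tfrac1{p^e}$ of $1-\tfrac1p$ from below, after which only the elementary ``$\fpt\notin(1-\tfrac1p,1)$'' half of the discreteness statement is needed; identifying exactly which estimate and which form of discreteness does the job, and verifying that $p>n-2$ is precisely its side condition, is where the real work lies.
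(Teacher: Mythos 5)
Your treatment of the first inequality is correct and essentially identical to the paper's: you reduce to the local case, apply the contrapositive of Corollary~\ref{cor: QuasiFedderCriterionNotQFPure}(b), and then correctly observe (a point the paper leaves implicit) that since $\fm^{[p^2]}$ is an ideal, $f^{(p+1)(p-2)}\Delta_1(f)\notin\fm^{[p^2]}$ forces $f^{(p+1)(p-2)}\notin\fm^{[p^2]}$, hence $\nu_f(p^2)\ge p^2-p-2$ and $\fpt(f)\ge 1-\tfrac1p-\tfrac2{p^2}$.

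For the second inequality, however, your proposal has a genuine gap, and you have in fact diagnosed it yourself in your final paragraph without resolving it. The paper does not need to iterate the Fedder-type estimates to higher $p$-powers, nor does it need the refined information $\nu_f(p)=p-2$ that you extract from Corollary~\ref{cor: QuasiFedderCriterionNotQFPure}(a). The missing ingredient is the mechanism by which quasi-$F$-purity re-enters: quasi-$F$-purity is incompatible with $\sum_i\deg(x_i)<\deg(f)$ (the ``general type'' regime), so one may assume $\sum_i\deg(x_i)\ge\deg(f)$. In that regime, the relevant parameters in \cite[Theorem~3.5]{HernandezBetancourtWittEmilyZhang_FPureThresholdsofHomogeneousPolynomials} collapse to $\lambda=a=b=1$, and then part~(2) of that theorem (this is where $p>n-2$, $f$ quasi-homogeneous with isolated singularity, and $k$ perfect are all used) asserts the exact rigidity $\fpt(f)=1-\tfrac{h}{p}$ for some nonnegative integer $h$, not merely the exclusion of the interval $(1-\tfrac1p,1)$. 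Feeding the first inequality into this rigidity gives $1-\tfrac{h}{p}\ge 1-\tfrac1p-\tfrac2{p^2}$, hence $h\le 1+\tfrac2p<2$ once $p\ne 2$, so $h\le 1$ and $\fpt(f)\ge 1-\tfrac1p$. Your example $x^2+y^4$ over $\FF_5$ with $\fpt=\tfrac34$ is exactly the case that is being discarded here: it has $\sum\deg(x_i)=3<4=\deg(f)$, so quasi-$F$-purity fails and the rigidity $\fpt=1-\tfrac{h}{p}$ is not available (indeed $\tfrac34$ is not of that form). Your proposed alternative---bootstrapping $\nu_f(p^e)$ bounds for $e\ge 3$ from the Kawakami--Takamatsu--Yobuko--Yoshikawa--Zong criterion---is not what the paper does, and it is not clear how to extract such higher-level bounds from the ideals $I_m$ of Theorem~\ref{thm: FeddersTypeCrit}, which are all compared against $\fm^{[p]}$ rather than $\fm^{[p^e]}$; in any case the ``elementary half'' of discreteness you mention would not suffice, since $\fpt$ could a priori land anywhere in $[1-\tfrac1p-\tfrac2{p^2},\,1-\tfrac1p)$, and it is precisely the full rigidity $\fpt=1-\tfrac{h}{p}$ that empties this window.
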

\begin{proof}
    If $f$ has an isolated singularity, via a change of coordinates we can assume the isolated singularity is at the origin. We can localize at $\fm = (x_1, \dots, x_n)$ and can thus assume that $A = k[x_1, \dots, x_n]_\fm$. If $f$ is quasi-homogeneous, we can reduce to the local case via \cite[proposition 2.25]{kawakami2022fedder}. If $\height(f) < \infty$, by the contrapositive of \ref{cor: QuasiFedderCriterionNotQFPure}(b) we see that either $f^{p-1} \not\in \fm^{[p]}$ (in which case $\fpt(f) = 1$ as $A/f$ is $F$-pure by Fedder's Criterion) or $f^{(p+1)(p-2)} = f^{p^2 - p - 2} \not\in \fm^{[p^2]}$. The former case is immediate, so we reduce to the latter case. This case implies that 
    $$\fpt(f) \geq \frac{\nu_f(p^2)}{p^2} \geq1 - \frac{1}{p} - \frac{2}{p^2}$$
    If $f$ is an isolated singularity over a perfect field, then $\mathrm{Jac}(f) = \fm$. Assuming $f$ is quasi-homogeneous, we fix an $\NN$-grading on $A$ such that $f$ is homogeneous. In this setting, we note that if $\sum \deg(x_i) < \deg(f)$ then $A/(f)$ is of general type, and thus, never quasi-$F$-pure. Thus, if   $\sum \deg(x_i) \geq \deg(f)$ then $\lambda = a = b = 1$ for $\lambda,a,b$ used in \cite[Theorem 3.5]{HernandezBetancourtWittEmilyZhang_FPureThresholdsofHomogeneousPolynomials}. If $p > n-2$, then by \cite[Theorem 3.5(2)]{HernandezBetancourtWittEmilyZhang_FPureThresholdsofHomogeneousPolynomials} $\fpt(f)$ takes the form $1 - \frac{h}{p}$ for some $h$.  The bound $\fpt(f) \geq 1 - \frac{1}{p} - \frac{2}{p^2}$ shows that $h < 2$ provided that $p \neq 2$, so it follows that
    $$\fpt(f) \geq 1 - \frac{1}{p}$$ 
    as desired.
\end{proof}
As $\fpt(f) = 1$ if and only if $A/(f)$ is $F$-pure, we obtain the following equality.
\begin{corollary}
    Let $A= k[x_1, \dots, x_n]$ for $k$ a perfect field of odd characteristic $p > n - 2$ and $f \in A$ be a quasi-homogeneous isolated singularity. If $\height(f) \in (1,\infty)$ (e.g. $A/f$ is quasi-$F$-pure at the isolated singularity but not $F$-pure), then $\fpt(f) = 1 - \frac{1}{p}$.
\end{corollary}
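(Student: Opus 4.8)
The plan is to sandwich $\fpt(f)$ between $1 - \frac1p$ and $1 - \frac1p$, using Theorem \ref{thm: MAIN FPT/QFP} for the lower bound and the hypothesis $\height(f) > 1$ together with the rigidity of $F$-pure thresholds of homogeneous polynomials for the upper bound. Almost all of the work has already been done inside the proof of Theorem \ref{thm: MAIN FPT/QFP}; the corollary is really an observation, so the write-up should be short.

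First I would extract the lower bound. Since $\height(f) \in (1,\infty)$, in particular $\height(f) < \infty$, so $A/(f)$ is quasi-$F$-pure. By hypothesis $k$ is perfect, $p$ is odd, $p > n-2$, and $f$ defines a quasi-homogeneous isolated singularity, so every hypothesis of the second half of Theorem \ref{thm: MAIN FPT/QFP} is satisfied and hence $\fpt(f) \geq 1 - \frac1p$.

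Next I would argue the upper bound. Because $\height(f) > 1$, the ring $A/(f)$ is not $F$-pure, so $\fpt(f) \neq 1$, i.e. $\fpt(f) < 1$. Fixing an $\NN$-grading on $A$ making $f$ homogeneous, the ``general type'' exclusion used in the proof of Theorem \ref{thm: MAIN FPT/QFP} applies: if $\sum \deg(x_i) < \deg(f)$ then $A/(f)$ would be of general type and hence not quasi-$F$-pure, contradicting $\height(f) < \infty$; so $\sum \deg(x_i) \geq \deg(f)$ and therefore $\lambda = a = b = 1$ in the notation of \cite[Theorem 3.5]{HernandezBetancourtWittEmilyZhang_FPureThresholdsofHomogeneousPolynomials}. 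Since $p > n-2$, \cite[Theorem 3.5(2)]{HernandezBetancourtWittEmilyZhang_FPureThresholdsofHomogeneousPolynomials} forces $\fpt(f) = 1 - \frac{h}{p}$ for some integer $h \geq 0$; combined with $\fpt(f) < 1$ this gives $h \geq 1$, so $\fpt(f) \leq 1 - \frac1p$.

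Combining the two bounds yields $\fpt(f) = 1 - \frac1p$. I do not expect a genuine obstacle here, since the numerical heavy lifting (the $1 - \frac1p - \frac{2}{p^2}$ estimate via the quasi-Fedder criterion, and the fact that $\fpt$ has the shape $1 - \frac hp$) is already packaged in Theorem \ref{thm: MAIN FPT/QFP}; the only point requiring a word of care is reusing the general-type exclusion to license the hypotheses of \cite[Theorem 3.5]{HernandezBetancourtWittEmilyZhang_FPureThresholdsofHomogeneousPolynomials}, which is legitimate precisely because quasi-$F$-purity is assumed.
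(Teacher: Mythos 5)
Your proof is correct and matches the paper's (implicit, one-line) argument: the lower bound is Theorem \ref{thm: MAIN FPT/QFP}, and the upper bound comes from the discreteness $\fpt(f) = 1 - h/p$ together with $\fpt(f) < 1$, which forces $h \geq 1$. The only cosmetic difference is that you re-run the general-type exclusion to license \cite[Theorem 3.5(2)]{HernandezBetancourtWittEmilyZhang_FPureThresholdsofHomogeneousPolynomials}, whereas the paper notes one can simply cite \cite[Corollary 3.7]{HernandezBetancourtWittEmilyZhang_FPureThresholdsofHomogeneousPolynomials} for the upper bound directly, but both routes are legitimate and equivalent.
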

We note that $1 - \frac{1}{p}$ is the highest attainable $F$-pure threshold for a quasi-homogeneous polynomial that is not $F$-pure by \cite[corollary 3.7]{HernandezBetancourtWittEmilyZhang_FPureThresholdsofHomogeneousPolynomials}. Thus, in terms of the $F$-pure threshold and in view of theorem \ref{thm: MAIN FPT/QFP}, quasi-$F$-pure singularities are ``as close to being $F$-pure'' as possible. We caution that the converse of theorem \ref{thm: MAIN FPT/QFP} is false in many cases. The following examples were computed via sampling from a space of certain polynomials using the methods of \cite{BatubaraGarzellaPan_K3SurfacesofanyArtinMazurHeightoverF5andF7}.
\begin{exam}
    The following equations in $\FF_7[x_1,x_2,x_3,x_4]$ define supersingular K3 surfaces (i.e. K3 surfaces that are not quasi-$F$-pure) which have $F$-pure threshold $1 - 1/7$. 
    \begin{itemize}
    \item $4x_1^4 + 5x_1^3x_3 + 3x_1^3x_4 + 6x_1^2x_2^2 + 2x_1^2x_2x_3 + 3x_1^2x_2x_4 + 3x_1^2x_3^2 + 3x_1^2x_3x_4 + 5x_1^2x_4^2 + 3x_1x_2^2x_4 + 6x_1x_2x_3^2 + x_1x_2x_4^2 + 2x_1x_3^2x_4 + 5x_1x_3x_4^2 + 4x_1x_4^3 + 5x_2^3x_3 + 5x_2^3x_4 + 5x_2^2x_3^2 + 2x_2^2x_3x_4 + 3x_2^2x_4^2 + 5x_2x_3^2x_4 + 4x_2x_3x_4^2 + 2x_2x_4^3 + 2x_3x_4^3$ \\
    \item $5x_1^4 + 3x_1^3x_2 + x_1^3x_3 + 6x_1^3x_4 + 5x_1^2x_2^2 + 4x_1^2x_2x_4 + 2x_1^2x_3^2 + 5x_1^2x_3x_4 + 4x_1x_2^3 + 4x_1x_2^2x_3 + 4x_1x_2^2x_4 + 5x_1x_2x_3x_4 + 2x_1x_3^2x_4 + 5x_1x_3x_4^2 + x_1x_4^3 + 5x_2^4 + x_2^3x_3 + 3x_2^2x_3^2 + 4x_2^2x_3x_4 + 4x_2^2x_4^2 + 3x_2x_3^3 + 5x_2x_3^2x_4 + 5x_2x_4^3 + 4x_3^4 + 5x_3^3x_4 + x_3x_4^3 + 5x_4^4$ \\
    \item $3x_1^4 + 3x_1^3x_2 + 3x_1^3x_3 + 6x_1^2x_2^2 + 3x_1^2x_2x_4 + 2x_1^2x_3^2 + 2x_1^2x_3x_4 + 3x_1^2x_4^2 + 6x_1x_2^3 + 5x_1x_2^2x_3 + x_1x_2x_3x_4 + 5x_1x_2x_4^2 + 5x_1x_3^3 + 4x_1x_3^2x_4 + 3x_1x_3x_4^2 + 6x_1x_4^3 + x_2^4 + 4x_2^3x_4 + 3x_2^2x_3^2 + 5x_2^2x_3x_4 + 5x_2x_3^3 + x_2x_3^2x_4 + 6x_2x_3x_4^2 + x_3^3x_4 + x_3^2x_4^2 + 3x_3x_4^3 + 4x_4^4$
\end{itemize}
\end{exam}
\section{Fermat Type Hypersurfaces}
The goal of this section is to investigate when a Fermat type hypersurface (e.g. a hypersurface defined by an equation of the form $x_1^d + \dots + x_n^d$) is $F$-pure, quasi-$F$-pure but not $F$-pure, or not quasi-$F$-pure. We first note that the characteristic $2$ differs slightly from the general case: we outline this case below.
\begin{lemma}\label{lem: Fermat char2}
    For $n \geq 2$, set $f = x_1^{d} + \dots + x_n^{d}  \in k[x_1, \dots, x_n]$ over $k$, an $F$-finite field of characteristic $2$. Then
    $$\height(f) = \begin{cases}
        1 & d = 1 \\
        2 & d = 3 \textrm{ and } n \geq 3\\
        \infty & \textrm{ otherwise}
    \end{cases}$$
\end{lemma}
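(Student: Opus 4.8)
The plan is to apply the Fedder-type criterion of Theorem~\ref{thm: FeddersTypeCrit} directly in characteristic $p=2$, where $f^{p-1}=f$, $\fm^{[p]}=(x_1^2,\dots,x_n^2)$, and the universal Witt polynomial gives $\Delta_1(f^{p-1})=\Delta_1(f)=\sum_{i<j}x_i^dx_j^d$. Since $f$ has coefficients in $\FF_2$ and the maps $u,\theta$, hence all the ideals $I_m$, are defined over $\FF_2$ and stable under the flat base change $\FF_2\hookrightarrow k$, I may assume $k=\FF_2$. The easy cases then fall out at once: if $d=1$ then $A/(f)$ is regular, hence $F$-pure, so $\height(f)=1$; if $d\geq 2$ then $f\in\fm^{[2]}$, so $A/(f)$ is not $F$-pure and $\height(f)\geq 2$; and if $d\geq 4$ then each monomial $x_i^dx_j^d$ of $\Delta_1(f)$ lies in $\fm^{[4]}$, so $\Delta_1(f)\in\fm^{[4]}$ and Corollary~\ref{cor: QuasiFedderCriterionNotQFPure}(b), whose prefactor $f^{(p+1)(p-2)}$ equals $f^0=1$ when $p=2$, gives $\height(f)=\infty$. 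This leaves $d=2$ and $d=3$.

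For $d=2$, the decisive feature of characteristic $2$ is that both $f=(x_1+\dots+x_n)^2$ and $\Delta_1(f)=\sum_{i<j}x_i^2x_j^2=(\sum_{i<j}x_ix_j)^2$ are squares; write $f=g^2$, $\Delta_1(f)=G^2$. I would show $\theta$ annihilates $I_1\cap\ker(u)$: for $a=cg^2\in I_1$ one has $\Delta_1(f)a=(Gg)^2c$, and since $(Gg)^2$ is a square it involves only monomials all of whose exponents are even, so $u\bigl(F_*((Gg)^2c)\bigr)$ depends only on the part of $c$ supported on monomials with all exponents odd. But $F_*(cg^2)\in\ker(u)$ says that the analogous part of $cg^2=(\sum_ix_i^2)c$, which equals $g^2$ times the all-odd part of $c$, is zero; as $A$ is a domain this forces the all-odd part of $c$ to vanish, hence $\theta(F_*a)=0$. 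Thus $I_2=(f)=I_1$, and since the recursion depends only on the previous ideal, $I_m=(f)\subseteq\fm^{[2]}$ for all $m$, so $\height(f)=\infty$.

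For $d=3$ I would first record that in characteristic $2$, writing $b=\sum_{\vec i\in\{0,1\}^n}x^{\vec i}b_{\vec i}^2$ (grouping monomials by the parity vector of their exponents) gives $u(F_*b)=b_{\vec 1}$, and that $\Delta_1(f)f=\sum_{a<b<c}x_a^3x_b^3x_c^3+\sum_{a\neq b}x_a^6x_b^3$. When $n\geq 3$, I would feed $h=x_4x_5\cdots x_n$ (read as $1$ when $n=3$) into the recursion: a parity check shows $F_*(hf)\in\ker(u)$, and of all the monomials of $h\cdot\Delta_1(f)f$ the only one with all exponents odd is $x_1^3x_2^3x_3^3\cdot x_4\cdots x_n=(x_1\cdots x_n)(x_1x_2x_3)^2$ (the $x_a^6x_b^3$ terms never contribute, and $x_a^3x_b^3x_c^3\cdot x_4\cdots x_n$ is all-odd only for $\{a,b,c\}=\{1,2,3\}$), with coefficient $3\equiv 1\pmod{2}$; hence $\theta(F_*(hf))=x_1x_2x_3\notin\fm^{[2]}$, so $I_2\not\subseteq\fm^{[2]}$ and $\height(f)=2$. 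When $n=2$, so $\Delta_1(f)=x_1^3x_2^3$, I would instead prove $\theta(I_1\cap\ker(u))\subseteq(f)$: writing a general element of $I_1$ as $hf$ and decomposing $h$ by parity, a direct computation gives $\theta(F_*(hf))=x_1^3x_2h_{10}+x_1x_2^3h_{01}$, while $F_*(hf)\in\ker(u)$ forces $x_1h_{01}=x_2h_{10}$; coprimality of $x_1,x_2$ in the UFD $\FF_2[x_1,x_2]$ then yields $h_{10}=x_1t$, $h_{01}=x_2t$, so $\theta(F_*(hf))=x_1x_2t\,(x_1^3+x_2^3)\in(f)$. Again $I_m=(f)\subseteq\fm^{[2]}$ for all $m$, so $\height(f)=\infty$.

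The main obstacle is the case $d=3$, $n=2$: it is not covered by Corollary~\ref{cor: QuasiFedderCriterionNotQFPure}(b) (since $x_1^3x_2^3\notin\fm^{[4]}$) and there is no ``everything is a square'' shortcut as in $d=2$, so one must run the $I_m$-recursion by hand and verify it stabilizes at $(f)$ forever; the delicate part is the bookkeeping of which monomials of $\Delta_1(f)\,hf$ have all exponents odd, together with the divisibility argument pinning down $h_{10}$ and $h_{01}$. A secondary point needing care is the $n\geq 3$ construction, where one must check that the single ``good'' monomial $x_1^3x_2^3x_3^3x_4\cdots x_n$ survives $u$ (is not cancelled by another monomial of $h\cdot\Delta_1(f)f$), which is exactly why $h$ must be the full product $x_4\cdots x_n$.
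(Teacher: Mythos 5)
Your proof is correct, but it differs markedly from the paper's own argument in several cases, and the comparison is worth recording. For $d \geq 4$ you invoke Corollary~\ref{cor: QuasiFedderCriterionNotQFPure}(b) exactly as the paper does for $d \geq 5$ (the paper handles $d=4$ separately by observing that $f = (x_1+\cdots+x_n)^4$ makes $R$ non-reduced, but your route folds $d=4$ in at no extra cost since $x_i^4x_j^4 \in \fm^{[4]}$). Where you really diverge is in the remaining low-degree cases: for $d=2$ the paper dispatches the claim in one line by noting that $R = k[x]/((x_1+\cdots+x_n)^2)$ is non-reduced and hence not quasi-$F$-pure, whereas you run the $I_m$-recursion of Theorem~\ref{thm: FeddersTypeCrit} from scratch, exploiting the fact that both $f$ and $\Delta_1(f)$ are squares to show $I_2 = I_1 = (f)$; for $d=3$, $n=2$ the paper observes that $d>n$ forces $R$ to have positive $a$-invariant (``general type'') and hence not quasi-$F$-pure, while you again carry out the recursion by hand with a careful parity decomposition and a coprimality/divisibility argument; and for $d=3$, $n\geq 3$ the paper cites \cite[Example~7.9]{kawakami2022fedder}, whereas you reprove the height-$2$ statement directly by feeding $h = x_4\cdots x_n$ into the recursion and isolating the unique all-odd monomial $x_1^3x_2^3x_3^3x_4\cdots x_n$ of $h\,\Delta_1(f)f$. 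Both routes are valid; the paper's is shorter because it appeals to structural facts outside the Fedder machinery (non-reducedness, the $a$-invariant bound, and a worked example from the literature), while yours is longer but self-contained and explicit, demonstrating in each case exactly how the ideals $I_m$ behave. Your parenthetical remark that the $d=3$, $n=2$ case ``is not covered'' by Corollary~\ref{cor: QuasiFedderCriterionNotQFPure}(b) is accurate of part (b) specifically, but it is worth noting that the paper does not need the recursion here at all, since the much cheaper general-type observation already applies.
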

\begin{proof}
    The case of $d = 1$ is clear. If $d = 2$ or $d = 4$, $R$ is non-reduced and hence not quasi-$F$-pure. If $d = 3$, via \cite[Example 7.9]{kawakami2022fedder} we see that the Fermat cubic has height $2$ in characteristic $2$ when $n \geq 3$; if $n < 3$ then $R$ is of general type and hence not quasi-$F$-pure. This just leaves the case where $d \geq 5$. \\ 

    To prove that $\height(f) = \infty$ when $d \geq 5$, we can apply corollary \ref{cor: QuasiFedderCriterionNotQFPure}(b) when $p = 2$: It is thus sufficient to show that $\Delta_1(f) \in \fm^{[4]}$. Fortunately in characteristic $2$, $\Delta_1(f)$ is easily computable.
    $$\Delta_1(f) = \frac{\left(\sum_{i=1}^n x_i^{d}\right)^2 - \sum_{i=1}^n x_i^{2d}}{2} = \sum_{i < j}x_i^d\cdot x_j^d$$
    When $d \geq 5$ it immediately follows that $\Delta_1(f) \in \fm^{[4]}$.
\end{proof}
\begin{corollary}
    As this characterization holds for all $n \geq 2$, this lemma provides examples of smooth Fano varieties of any dimension $\geq 5$ in characteristic $2$ that are not quasi-$F$-pure.
\end{corollary}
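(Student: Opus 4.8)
The goal is to produce, for every $m \geq 5$, a smooth projective Fano variety of dimension $m$ over a field of characteristic $2$ that is not globally quasi-$F$-split. For a smooth variety every local ring is regular, hence $F$-pure, hence quasi-$F$-pure, so the pointwise notion is vacuous here; thus ``not quasi-$F$-pure'' can only mean that the map $\cO_X \to F_*(W_n\cO_X/p)$ fails to split globally for every $n$, i.e.\ that $X$ is not quasi-$F$-split. The plan is to take the degree-$5$ Fermat hypersurfaces: fix $k$ an $F$-finite field with $\operatorname{char} k = 2$, let $n \geq 7$, set $f = x_1^5 + \cdots + x_n^5 \in k[x_1,\dots,x_n]$ and $X = Z(f) \subset \PP^{n-1}_k$. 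First I would record the elementary geometry of $X$: it is smooth because $\operatorname{char} k = 2 \nmid 5$; one has $\omega_X \cong \cO_X(5 - n)$, which is anti-ample since $n > 5$, so $X$ is Fano; and $\dim X = n - 2$, which runs through all integers $\geq 5$ as $n$ runs through $\geq 7$.

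Next I would connect $X$ to Lemma \ref{lem: Fermat char2}. Its section ring with respect to $\cO_X(1)$ is precisely $R := k[x_1,\dots,x_n]/(f)$: the equality $R = \bigoplus_{j \geq 0} H^0(X,\cO_X(j))$ holds because $H^1(\PP^{n-1}, \cO(j-5)) = 0$, and $R$ is Cohen--Macaulay (a hypersurface ring) and normal (its only singular point is the vertex $\fm$, of codimension $n - 1 \geq 2$). Lemma \ref{lem: Fermat char2}, applied with $p = 2$ and $d = 5 \geq 5$, gives $\height(f) = \infty$, so $R$ is not quasi-$F$-pure. It remains to transfer this to $X$ via the section-ring (cone) correspondence: I would show $X$ is quasi-$F$-split if and only if $R$ is quasi-$F$-pure, and the corollary follows. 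The input that makes this work is cohomological vanishing: being a complete intersection in projective space, $X$ has $H^i(X,\cO_X(j)) = 0$ for $0 < i < \dim X$ and all $j$, and by Serre duality $H^{\dim X}(X,\cO_X(j)) \cong H^0(X,\cO_X(5-n-j))^\vee = 0$ for all $j \geq 0$ because $5 - n < 0$; so every twist appearing when one compares a graded splitting of $F_*(W_m R/p)$ with a sheaf-level splitting of $F_*(W_m\cO_X/p)$ has vanishing higher cohomology.

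The step I expect to be the main obstacle is making this cone correspondence precise for quasi-$F$-splitting, as opposed to the classical $F$-split case --- where it is well known, and where the sign condition $a(R) = 5 - n < 0$ (the ``Fano'' condition) is exactly what forces $F$-splitting of $X$ to propagate to $F$-purity of $R$. I would first hunt for the statement in the literature on global quasi-$F$-splitting of section rings and Fano-type varieties, e.g.\ \cite{Kawakami++_2022quasifsplittingsbirationalgeometry, tanaka2024quasifesplittings, kawakami2022fedder, Jagathese_OnQuasiFPurityofExcellentRings}. If no clean citation is available, note that only one implication is actually needed here --- that global quasi-$F$-splitting of $X$ forces quasi-$F$-purity of $R$ --- and it can be done directly: restrict a splitting of $\cO_X \to F_*(W_m\cO_X/p)$ to the punctured cone $\Spec R \setminus \{\fm\}$, then extend it over the vertex using the depth and vanishing statements above, exactly as in the $F$-split argument. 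Granting that, Lemma \ref{lem: Fermat char2} yields the corollary, and letting $n$ vary over $\{7, 8, 9, \dots\}$ produces the examples in every dimension $\geq 5$.
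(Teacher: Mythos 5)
The paper states this corollary with no written proof, treating it as an immediate consequence of Lemma~\ref{lem: Fermat char2} together with the cone correspondence that underlies the Fedder-type framework of \cite{kawakami2022fedder}, so there is no proof to compare against line-by-line; your reconstruction is the argument the authors evidently have in mind. You correctly assemble all the ingredients: take $d=5$ and $n\geq 7$, check that $X=Z(f)\subset\PP^{n-1}$ is smooth (since $p=2\nmid 5$) and Fano (since $\omega_X\cong\cO_X(5-n)$ with $5-n<0$) of dimension $n-2\geq 5$; identify the section ring of $\cO_X(1)$ with $R=k[x_1,\dots,x_n]/(f)$; apply the lemma to get $\height(f)=\infty$; and transfer to $X$ using the cohomological vanishing $H^i(X,\cO_X(j))=0$ for $0<i<\dim X$ and $H^{\dim X}(X,\cO_X(j))=0$ for $j\geq 0$. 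You also correctly flag the cone correspondence for quasi-$F$-splitting as the only nonformal step; this is indeed in the literature you cite — Section~5 of \cite{kawakami2022fedder} is devoted precisely to relating quasi-$F$-splitting of a polarized projective variety to the quasi-$F$-pure height of its section ring under this kind of vanishing, and the paper is implicitly leaning on that machinery throughout.

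Two small remarks. First, your sketched fallback argument — restrict a splitting of $\cO_X\to F_*(W_m\cO_X/p)$ to the punctured cone $U=\Spec R\setminus\{\fm\}$ and extend over the vertex — would need more care than in the $F$-split case, because $U\to X$ is a $\GG_m$-torsor rather than \'etale, so $W_m\cO_U/p$ is not simply the pullback of $W_m\cO_X/p$; the cleaner route (and the one actually implemented in \cite{kawakami2022fedder}) is to take graded global sections of twists of the putative splitting and use the vanishing to identify the result with a splitting of $R\to F_*(W_m R/p)$. Since you say you would first hunt for a citation, this is not a gap, just a caution about the fallback. Second, nothing in your construction actually requires $n\geq 7$: with $n=6$ and $d=5$ one still gets a smooth Fano fourfold with $\height(f)=\infty$, so the paper's stated bound of dimension $\geq 5$ is slightly conservative and your argument in fact yields examples in dimension $\geq 4$.
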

In view of this we will primarily concern ourselves with the following setting in this section:
\begin{setting} \label{setting: FermatHypersurfaces char>2}
    Let $A = k[x_1, \dots, x_n]$ where $k$ is a perfect field of characteristic $p > 2$ and $n \geq 3$. Let $f = x_1^d + \dots x_n^d\in A$ for some $d > 1$ and take $R = A/(f)$.  
\end{setting}
In general, $F$-pure thresholds of any diagonal hypersurface have been computed in \cite{HernandezFInvariantsOfDiagonalHyp}; this can be used to classify whether a Fermat Hypersurface is $F$-pure or not. While these methods can partially recover the following results, it cannot detect for quasi-$F$-purity. Further, one can classify Fermat hypersurfaces very explicitly using the Fedder's criterion for (quasi)-$F$-purity \cite{kawakami2022fedder}: We spell these arguments out below.  
\begin{lemma}\label{lem: low degree}
In setting \ref{setting: FermatHypersurfaces char>2}, if $d < \lceil \frac{p}{a} \rceil$ and  $n \geq \frac{p-1}{a} $ for some $a \geq 1$, $R$ is $F$-pure.
\end{lemma}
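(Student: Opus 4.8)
The plan is to verify Fedder's criterion directly: since $R = A/(f)$ with $f = x_1^d + \dots + x_n^d$, I need to show $f^{p-1} \notin \fm^{[p]}$. Expanding $f^{p-1}$ by the multinomial theorem gives a sum of terms $\binom{p-1}{c_1,\dots,c_n}\prod_i x_i^{d c_i}$ with $\sum c_i = p-1$ and each $c_i \geq 0$. A monomial $\prod x_i^{e_i}$ lies in $\fm^{[p]}$ exactly when some $e_i \geq p$, so to escape $\fm^{[p]}$ it suffices to exhibit one term of the expansion in which every exponent $d c_i$ is at most $p-1$, and whose multinomial coefficient is nonzero in $k$ (which has characteristic $p$). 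The condition $d c_i \leq p - 1$ for all $i$ is equivalent to $c_i \leq \lceil p/d \rceil - 1$... more precisely $c_i \leq \lfloor (p-1)/d \rfloor$; set $q := \lfloor (p-1)/d \rfloor$, so we want a composition of $p-1$ into $n$ parts each at most $q$.

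So the first step is the purely combinatorial claim: under the hypotheses $d < \lceil p/a \rceil$ and $n \geq (p-1)/a$ for some $a \geq 1$, one has $nq \geq p-1$, which guarantees such a composition exists (distribute $p-1$ among the $n$ slots greedily, each slot absorbing up to $q$). The hypothesis $d < \lceil p/a\rceil$ should force $q = \lfloor (p-1)/d \rfloor \geq a$ — roughly, $d \leq \lceil p/a \rceil - 1 < p/a$ gives $p/d > a$, hence $\lfloor (p-1)/d \rfloor \geq a$ after checking the boundary carefully — and then $nq \geq na \geq p-1$ by the second hypothesis. This is the step I'd be most careful with: the interplay of the two floor/ceiling conditions is where an off-by-one error is most likely, and I expect this bookkeeping (rather than any conceptual difficulty) to be the main obstacle.

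The second step is to ensure the chosen term survives mod $p$, i.e. that the multinomial coefficient $\binom{p-1}{c_1,\dots,c_n}$ is a unit in $k$. Here I would invoke the standard fact (Lucas-type, or Kummer's theorem on carries) that $\binom{p-1}{c_1,\dots,c_n} \not\equiv 0 \pmod p$ whenever $\sum c_i = p-1$ and each $c_i \leq p-1$ — indeed $\binom{p-1}{c} \equiv (-1)^c \pmod p$ for $0 \leq c \leq p-1$, and iterating, the full multinomial coefficient reduces to $\pm 1$ mod $p$ since no carries occur in base $p$ when the parts sum to $p-1 < p$. Since our parts $c_i \leq q \leq p-1$ automatically, this applies to every term of the expansion, and in particular to the one we selected. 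Thus that monomial appears in $f^{p-1}$ with nonzero coefficient and has all exponents $\leq p-1 < p$, so $f^{p-1} \notin \fm^{[p]}$, and Fedder's criterion yields that $R$ is $F$-pure.

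One small point to address at the end: perfectness of $k$ is not actually needed for this argument (Fedder's criterion over any field suffices), but it is already part of Setting \ref{setting: FermatHypersurfaces char>2}, so no separate remark is required. If desired, one can also note that $n \geq 3$ and $d > 1$ from the setting are harmless — the argument goes through as long as the two displayed numerical hypotheses hold.
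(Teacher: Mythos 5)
Your proof is correct and takes essentially the same approach as the paper: apply Fedder's criterion and exhibit a term of $f^{p-1}$ whose exponents all stay strictly below $p$, by distributing $p-1$ across the $n$ indices so that each part is at most $a$ (your reduction $q = \lfloor (p-1)/d\rfloor \geq a$ is the same numerical observation the paper makes directly). You additionally verify that the multinomial coefficient is nonzero mod $p$ via the no-carries argument; the paper asserts this without justification, so this is a small but genuine improvement in rigor.
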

\begin{proof}
    Via Fedder's criterion \cite{FeddersCrit}, we need to show that $f^{p-1} \not\in \fm^{[p]}$. It is sufficient to construct a monomial present in $f^{p-1}$ which contains no $p$th powers. A general monomial of $f^{p-1}$ is of the following form:
    $$c \cdot x_1^{i_1 \cdot d}x_2^{i_2 \cdot d} \dots x_n^{i_n \cdot d}$$
    For $0 \neq c \in k$ and $\sum i_j = p-1$. Our goal is to pick $(i_1, \dots i_n)$ such that $i_j \cdot d < p$ for all $j$. Well as $d < \left\lceil \frac{p}{a}\right\rceil $, this is equivalent to finding $(i_1, \dots i_n)$ such that $i_j  \leq a$. As $a\cdot n \geq p-1 = \sum i_j$, it is necessarily possible to choose the $i_j$ such that they are all at most $a$; simply set them all to $a$ then subtract ones from them arbitrarily (without making any $i_j$ negative) $a\cdot n - (p-1)$ times.
\end{proof} 
\begin{lemma}\label{lem: high degree}
In setting \ref{setting: FermatHypersurfaces char>2}, if $d \geq \left\lceil\frac{p}{a+1}\right\rceil$ and $n < \frac{p-1}{a}$, then $R$ is not $F$-pure. If we further assume $n < \frac{p-2}{a}$, then $R$ is not quasi-$F$-pure. 
\end{lemma}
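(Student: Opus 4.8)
The plan is to deduce non-$F$-purity from Fedder's criterion and non-quasi-$F$-purity from Corollary~\ref{cor: QuasiFedderCriterionNotQFPure}(a), reducing both to the same combinatorial observation about the monomials of a power of $f$. Expanding, $f^{m} = \sum_{i_1 + \cdots + i_n = m} \binom{m}{i_1,\dots,i_n}\, x_1^{i_1 d} \cdots x_n^{i_n d}$, and a monomial $x_1^{i_1 d}\cdots x_n^{i_n d}$ lies in $\fm^{[p]} = (x_1^p,\dots,x_n^p)$ exactly when $i_j d \geq p$ for some $j$, equivalently when $\max_j i_j \geq \lceil p/d\rceil$. Since $\fm^{[p]}$ is an ideal, to conclude $f^{m} \in \fm^{[p]}$ it suffices that \emph{every} exponent vector $(i_1,\dots,i_n) \in \ZZ_{\geq 0}^n$ with $\sum_j i_j = m$ satisfies $\max_j i_j \geq \lceil p/d\rceil$. (In contrast with Lemma~\ref{lem: low degree}, here we want all monomials in the ideal, so whether the multinomial coefficients vanish modulo $p$ is irrelevant.)

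The key input is elementary: from $\sum_j i_j \leq n\max_j i_j$ one gets $\max_j i_j \geq m/n$, hence $\max_j i_j \geq \lceil m/n\rceil$ for every such vector. So $f^{p-1} \in \fm^{[p]}$ whenever $\lceil (p-1)/n\rceil \geq \lceil p/d\rceil$, and $f^{p-2} \in \fm^{[p]}$ whenever $\lceil (p-2)/n\rceil \geq \lceil p/d\rceil$. It then remains to extract these two ceiling inequalities from the hypotheses, which is the only place one must be slightly careful with the direction of the rounding estimates together with the integrality of $a,n,d$: from $d \geq \lceil p/(a+1)\rceil$ we get $(a+1)d \geq p$, hence $\lceil p/d\rceil \leq a+1$; from $n < (p-1)/a$ we get $(p-1)/n > a$, hence $\lceil (p-1)/n\rceil \geq a+1$ since $a$ is a positive integer; combining these gives $\lceil(p-1)/n\rceil \geq \lceil p/d \rceil$, so $f^{p-1}\in\fm^{[p]}$ and $R$ is not $F$-pure by Fedder's criterion. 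Under the stronger hypothesis $n < (p-2)/a$ the same computation gives $\lceil(p-2)/n\rceil \geq a+1 \geq \lceil p/d\rceil$, so $f^{p-2}\in\fm^{[p]}$ and Corollary~\ref{cor: QuasiFedderCriterionNotQFPure}(a) forces $\height(f) = \infty$, i.e. $R$ is not quasi-$F$-pure.

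I do not expect a genuine obstacle here: the argument is the mirror image of the one in Lemma~\ref{lem: low degree}, and the only points needing attention are the bookkeeping with ceilings and the observation that passing to a whole power of $f$ (rather than to a single well-chosen monomial) is legitimate precisely because membership in $\fm^{[p]}$ is closed under sums.
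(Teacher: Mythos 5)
Your proof is correct and follows essentially the same approach as the paper: apply Fedder's criterion (resp.\ Corollary~\ref{cor: QuasiFedderCriterionNotQFPure}(a)), observe that every monomial in $f^{p-1}$ (resp.\ $f^{p-2}$) has some exponent $i_j \geq a+1$, and conclude $i_j d \geq p$ from the degree hypothesis. The only cosmetic difference is that you phrase the pigeonhole step with explicit ceiling functions, while the paper argues directly that if all $i_j \leq a$ then $\sum i_j \leq an < p-1$.
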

\begin{proof}
  Assume that $n < \frac{p-1}{a}$. Via Fedder's criterion \cite{FeddersCrit}, we need to show that $f^{p-1} \in \fm^{[p]}$. A general monomial of $f^{p-1}$ is of the following form:
    $$c \cdot x_1^{i_1 \cdot d}x_2^{i_2 \cdot d} \dots x_n^{i_n \cdot d}$$
For $0 \neq c \in k$ and $\sum i_j = p-1$. As $n < \frac{p-1}{a}$, at least one of these $i_1, \dots i_n$ is at least $a+1$. Fix $j$ such that $i_j \geq a+1$. As $d \geq \left\lceil\frac{p}{a+1}\right\rceil$, this implies that $i_j \cdot d \geq p$, and so $c \cdot x_1^{i_1 \cdot d}x_2^{i_2 \cdot d} \dots x_n^{i_n \cdot d} \in \fm^{[p]}$. As this is true for every monomial, it follows that $f^{p-1} \in \fm^{[p]}$ and thus $f$ is not $F$-pure. \\

For the quasi-$F$-pure step, it is sufficient to show that $f^{p-2} \in \fm^{[p]}$ via corollary \ref{cor: QuasiFedderCriterionNotQFPure}(a). Given that $n < \frac{p-2}{a}$, the proof is identical.
\end{proof}

Combining these results yields the majority of our theorem:
\begin{theorem}\label{thm: FermatHypersurfaceClassification}
     Let $f = x_1^d + \dots + x_n^d\in A = k[x_1, \dots, x_n]$ where $k$ is of characteristic $p > 2$ and $n \geq 3$. Let $R = A/(f)$. Then for any $a \geq 1 $:
     \begin{enumerate}[label = (\arabic*)]
         \item If $\left\lceil \frac{p}{a+1}\right\rceil \leq d < \left\lceil \frac{p}{a}\right\rceil$, then we have separate cases depending on $n$:
         \begin{enumerate}[label = (\alph*)]
             \item If $n < \frac{p-2}{a}$, $R$ is not quasi-$F$-pure.
             \item If $\frac{p-2}{a} \leq n < \frac{p-1}{a}$, $R$ is not $F$-pure. Further, if $R$ is quasi-$F$-pure then $d < n$ or $d = n = 3$.  
             \item If $n \geq  \frac{p-1}{a}$, $R$ is $F$-pure.
         \end{enumerate} 
         \item If $d \geq p$, $R$ is not quasi-$F$-pure.
     \end{enumerate}
\end{theorem}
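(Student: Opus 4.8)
The plan is to peel off the three easy cases using results already in hand and then to extract the substantive ``Further'' clause of (1b) from the quasi‑Fedder criterion by a monomial count.

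First, (1c) is exactly Lemma~\ref{lem: low degree} with the stated $a$, and (1a) is the second assertion of Lemma~\ref{lem: high degree} (since $n < \tfrac{p-2}{a}$ forces $n < \tfrac{p-1}{a}$, while $d \ge \lceil\tfrac{p}{a+1}\rceil$ is assumed). For (2): if $d \ge p$ then $x_i^d \in \fm^{[p]}$ for each $i$, so $f \in \fm^{[p]}$ and hence $f^{p-2} \in \fm^{[p]}$, and Corollary~\ref{cor: QuasiFedderCriterionNotQFPure}(a) gives $\height(f) = \infty$. The first assertion of (1b), that $R$ is not $F$-pure, is the first assertion of Lemma~\ref{lem: high degree}. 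What remains is the ``Further'' clause, which I would phrase as: \emph{if $R$ is not $F$-pure and $d \ge n$ with $(d,n) \ne (3,3)$, then $R$ is not quasi-$F$-pure}. (When $d < n$ there is nothing to prove.)

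To prove this I would apply Corollary~\ref{cor: QuasiFedderCriterionNotQFPure}(b). Since $R$ is not $F$-pure, $f^{p-1} \in \fm^{[p]}$ by Fedder's criterion, so it suffices to produce a representative with $f^{(p+1)(p-2)}\Delta_1(f) \subset \fm^{[p^2]}$. Using $\Delta_1(f) = \sum \tfrac1p\binom{p}{\alpha_1,\dots,\alpha_n}\prod_i x_i^{d\alpha_i}$ (sum over $0 \le \alpha_i \le p-1$ with $\sum_i\alpha_i = p$) together with the multinomial expansion of $f^{(p+1)(p-2)}$, every monomial that can occur in the product is of the form $\prod_i x_i^{d\gamma_i}$ with $\sum_i \gamma_i = (p+1)(p-2) + p = p^2 - 2$; so the multinomial coefficients play no role — it is enough to place every such exponent vector into $\fm^{[p^2]}$. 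A monomial $\prod_i x_i^{d\gamma_i}$ lies in $\fm^{[p^2]}$ whenever some $d\gamma_i \ge p^2$, i.e.\ some $\gamma_i \ge \lceil p^2/d\rceil$, and by pigeonhole this is automatic for all admissible $(\gamma_i)$ provided
\[
n\bigl(\lceil p^2/d\rceil - 1\bigr) < p^2 - 2 .
\]

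The remaining work is to check this inequality, or to reroute, when $d \ge n$ and $(d,n)\ne(3,3)$. For $d > n$ it is comfortable: $\lceil p^2/d\rceil - 1 < p^2/d \le p^2/(n+1)$, and $n < \tfrac{p-1}{a} < p$ forces $p^2 \ge 2n+2$, so $np^2/(n+1) \le p^2 - 2$. For $d = n$ one writes $p = nt + s$ with $0 \le s < n$; a monomial count on $f^{p-1}$ shows that ``$R$ not $F$-pure'' forces $s \ge 2$. If $s \ge 3$ the same count gives $f^{p-2} \in \fm^{[p]}$ and Corollary~\ref{cor: QuasiFedderCriterionNotQFPure}(a) finishes directly; if $s = 2$ then $nt = p-2$ is odd, so $n$ is odd, hence $n \ge 5$ (as $n \ge 4$), whence $p^2 \equiv 4 \pmod n$ with $0 < 4 < n$, so $n(\lceil p^2/n\rceil - 1) = p^2 - 4 < p^2 - 2$ and the bound holds. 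The one residual case ``$s = 2$, $n < 4$'' is precisely $d = n = 3$, the stated exception. I expect this last paragraph to be the crux: the pigeonhole estimate is borderline exactly when $d = n$, and one needs both that ``not $F$-pure'' pins down $p \bmod n$ and that the parity of $p$ forces $n \ge 5$ in the single surviving residue class — with the Fermat cubic surface $d = n = 3$ genuinely unreachable by this method, which is why it must be excepted in the statement.
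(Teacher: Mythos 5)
Your handling of (1a), (1c), and (2), and of the first assertion of (1b), coincides with the paper's (via Lemmas~\ref{lem: low degree} and \ref{lem: high degree} and Corollary~\ref{cor: QuasiFedderCriterionNotQFPure}(a)), but your proof of the ``Further'' clause of (1b) is a genuinely different, and more self-contained, argument. The paper disposes of $d > n$ by observing $R$ is of general type (hence never quasi-$F$-pure, a fact imported from the literature), and disposes of $d = n > 3$ by citing \cite[Example~7.3]{kawakami2022fedder} using $p \not\equiv 1 \pmod n$. You instead apply Corollary~\ref{cor: QuasiFedderCriterionNotQFPure}(b) directly: since $f$ is diagonal, $\Delta_1(f)$ has the explicit monomial support you describe, and every monomial of $f^{(p+1)(p-2)}\Delta_1(f)$ is $\prod_i x_i^{d\gamma_i}$ with $\sum\gamma_i = p^2-2$, so the only work is the pigeonhole estimate $n(\lceil p^2/d\rceil - 1) < p^2-2$. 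Your verification of that estimate is correct: for $d > n$ it follows from $p^2 > 2n+2$ (which holds since $n < p$ and $p > 2$); for $d = n$, the ``not $F$-pure'' hypothesis pins $s := p \bmod n$ to be $\geq 2$ (the multinomial coefficients are nonzero mod $p$ by Wilson's theorem, so the converse direction is genuine), $s \geq 3$ is already killed by $f^{p-2}\in\fm^{[p]}$ and Corollary~\ref{cor: QuasiFedderCriterionNotQFPure}(a), and when $s = 2$ the parity of $p-2 = nt$ forces $n$ odd, hence $n \geq 5$, giving $n(\lceil p^2/n\rceil - 1) = p^2-4 < p^2-2$, with $d = n = 3$ the sole escape. (Incidentally, in case (1b) one always has $an = p-2$, so $s = 2$ automatically; your $s \geq 3$ branch is vacuous there, but harmless.) The trade-off is clear: the paper's route is shorter but leans on two nontrivial external facts, whereas yours is longer but entirely elementary, using nothing beyond Fedder-type monomial counting and a little number theory, and it also makes visible exactly why $(d,n) = (3,3)$ must be excepted --- the pigeonhole bound fails there by precisely one.
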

If we chase through the cases we obtain an interesting criterion for which $F$-purity and quasi-$F$-purity are equivalent notions for Fermat hypersurfaces. 
\begin{corollary}\label{cor: FermatHypQFPequivtoFP}
    If $p \not\equiv 2$ mod $n$, then any Fermat type hypersurface is quasi-$F$-pure if and only if it is $F$-pure. 
\end{corollary}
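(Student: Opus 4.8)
The plan is to read this off the classification in Theorem \ref{thm: FermatHypersurfaceClassification} by a short case analysis, with no new computation. Since a ring of quasi-$F$-pure height $1$ is in particular quasi-$F$-pure, $F$-purity always implies quasi-$F$-purity, so only the converse requires proof. Thus I would assume $p \not\equiv 2 \pmod n$ and that $R = A/(f)$ is quasi-$F$-pure, and show $R$ is $F$-pure. First note that the hypothesis forces $p > 2$: if $p = 2$ then $n \mid 0 = p-2$, so $p \equiv 2 \pmod n$. We may also assume $n \ge 3$; the case $n \le 2$ is immediate, since a Fermat curve in two variables has $\sum \deg x_i < \deg f$ once $d \ge 3$ and so is ``of general type'' and not quasi-$F$-pure, while for $d \le 2$ it is reduced and $F$-pure. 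So we are in Setting \ref{setting: FermatHypersurfaces char>2}.

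Next I would locate the pair $(d,n)$ within the clauses of Theorem \ref{thm: FermatHypersurfaceClassification}. If $d \ge p$, clause (2) says $R$ is not quasi-$F$-pure, a contradiction; hence $d < p$. The half-open intervals $\bigl[\lceil p/(a+1)\rceil,\ \lceil p/a\rceil\bigr)$, $a \ge 1$, are pairwise disjoint and (being flush at their common endpoints) cover $\{1,\dots,p-1\}$, so there is a unique $a \ge 1$ with $\lceil p/(a+1)\rceil \le d < \lceil p/a\rceil$; I would apply clause (1) with this $a$. In the subcase $n < \tfrac{p-2}{a}$ the theorem says $R$ is not quasi-$F$-pure, which is excluded. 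In the subcase $\tfrac{p-2}{a} \le n < \tfrac{p-1}{a}$, multiplying through by $a$ gives $p-2 \le an < p-1$, forcing $an = p-2$ as $an \in \ZZ$; thus $n \mid (p-2)$, i.e.\ $p \equiv 2 \pmod n$, contrary to hypothesis. Hence the remaining subcase $n \ge \tfrac{p-1}{a}$ holds, where the theorem gives that $R$ is $F$-pure, as desired.

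The only substantive point is the observation that the inequality $\tfrac{p-2}{a} \le n < \tfrac{p-1}{a}$ cutting out the middle subcase of clause (1) is equivalent to $an = p - 2$, hence to $p \equiv 2 \pmod n$; the congruence hypothesis therefore excises precisely the borderline regime in which a Fermat hypersurface can be quasi-$F$-pure without being $F$-pure. I do not expect a real obstacle here: the points that need a moment's care are the tiling of $\{1,\dots,p-1\}$ by the intervals $\bigl[\lceil p/(a+1)\rceil,\lceil p/a\rceil\bigr)$ and the low-dimensional and characteristic-$2$ edge cases, all of which are routine.
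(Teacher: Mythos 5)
Your proposal is correct and follows the same essential route as the paper: observe that the middle subcase (1b) of Theorem~\ref{thm: FermatHypersurfaceClassification} occurs precisely when $an = p-2$, i.e.\ $p \equiv 2 \pmod n$, so the hypothesis makes that clause vacuous and the remaining clauses force $R$ to be either $F$-pure or not quasi-$F$-pure. You fill in a few routine points the paper leaves tacit (the tiling of $\{1,\dots,p-1\}$ by the intervals indexed by $a$, and the $p=2$ and $n\le 2$ edge cases), but the key observation is identical.
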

\begin{proof}
   We note that if $n \in \NN$,  $\frac{p-2}{a} \leq n < \frac{p-1}{a}$ if and only if $n = \frac{p-2}{a}$. Thus if $p > 2$ and $p \not\equiv 2$ mod $n$, case $(1b)$ in the theorem above is vacuous, and in every other case, $R$ is either $F$-pure or not quasi-$F$-pure.  If $p = 2$ then it is always equivalent to $2$ mod $n$; therefore the result follows.
\end{proof}
We now prove the theorem. 
\begin{proof}[proof of theorem \ref{thm: FermatHypersurfaceClassification}]
    The following cases are immediate given the above lemmas:
    \begin{enumerate}[label = (\arabic*)]
        \item  \begin{enumerate}[label = (\alph*)]
             \item Lemma \ref{lem: high degree}
              \addtocounter{enumii}{1}
             \item Lemma \ref{lem: low degree}
            \end{enumerate} 
        \item If $d \geq p$ then $f \in \fm^{[p]}$. As $p > 2$, it follows that $f^{p-2} \in \fm^{[p]}$, so $R$ is not quasi-$F$-pure by corollary \ref{cor: QuasiFedderCriterionNotQFPure}(a).
    \end{enumerate}
    This leaves only case $(1b)$. We note that this case is vacuous unless $p \equiv 2$ mod $n$; thus we fix $a \in \NN$ such that $n = \frac{p-2}{a}$ and $\left\lceil \frac{p}{a+1}\right\rceil \leq d < \left\lceil \frac{p}{a}\right\rceil$. We note that by lemma \ref{lem: high degree}, $R$ is not $F$-pure. \\ 
    It is thus sufficient to check that $R$ is quasi-$F$-pure, for $n,d,p$ as above, if $d < n$ or $d = n = 3$. We prove this by contrapositive. If $d > n$, $R$ is of general type and is not quasi-$F$-pure. If $d = n > 3$, as $p \not\equiv 1$ mod $n$ $R$ is never quasi-$F$-pure by \cite[Example 7.3]{kawakami2022fedder}.

\end{proof}

There is evidence to suggest that the converse of case (1b) is also true. If $n=d=3$, then $R$ is the affine cone of an elliptic curve. In this case $p \equiv 2$ mod $3 = n$, so it is known that $R$ is a supersingular elliptic curve and hence quasi-$F$-pure of height $2$. Thus, to complete the classification it is sufficient to affirmatively answer the following question:

\begin{quest}
     Let $f = x_1^d + \dots + x_n^d\in A = k[x_1, \dots, x_n]$ where $k$ is of characteristic $p > 2$ and $n \geq 3$. Suppose that $p \equiv 2$ mod $n$ and choose $a \geq 1$ such that $n = \frac{p-2}{a}$. If $\left\lceil \frac{p}{a+1}\right\rceil \leq d < n$, is $R = A/(f)$ quasi-$F$-pure?
\end{quest}
Via a standard computation, all Fermat type hypersurfaces as above have $F$-pure threshold $1 - \frac{1}{p}$. While the converse of Theorem \ref{thm: MAIN FPT/QFP} does not always hold, the following lemma suggests that the singularities in this case are well controlled.
\begin{lemma}
     Let $f = x_1^d + \dots + x_n^d\in A = k[x_1, \dots, x_n]$ where $k$ is of characteristic $p > 2$ and $n \geq 3$. Let $R = A/(f)$. Suppose that $p \equiv 2$ mod $n$ and choose $a \geq 1$ such that $n = \frac{p-2}{a}$. If $d < n$, then $\fpt(f) \geq 1 - \frac{1}{p}$. If we further assume that $d \geq \left\lceil \frac{p}{a+1}\right\rceil$, then $\fpt(f) = 1 - \frac{1}{p}$.
\end{lemma}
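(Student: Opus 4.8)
The plan is to reduce, exactly as in the proof of Theorem~\ref{thm: MAIN FPT/QFP}, to the structure theorem of \cite{HernandezBetancourtWittEmilyZhang_FPureThresholdsofHomogeneousPolynomials}, and then exhibit one explicit non-vanishing monomial. First I would record the hypotheses: since $p\equiv 2\pmod n$ with $n\geq 3$ we have $n\mid p-2$, so $n\leq p-2$ and hence $p>n-2$; and $1<d<n<p$, so $p\nmid d$, which makes $f=x_1^d+\dots+x_n^d$ an isolated singularity at the origin (its partials $dx_i^{d-1}$ cut out $\fm$ up to radical), and $f$ is homogeneous of degree $d$ for the standard grading, for which $\sum\deg(x_i)=n>d=\deg(f)$. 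Thus \cite[Theorem 3.5(2)]{HernandezBetancourtWittEmilyZhang_FPureThresholdsofHomogeneousPolynomials} applies and gives $\fpt(f)=1-\frac{h}{p}$ for some integer $h\geq 0$, with $h\geq 1$ precisely when $R$ is not $F$-pure.

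For the first claim it then suffices to prove $\fpt(f)>1-\frac{2}{p}$, since this forces $h<2$, hence $h\leq 1$, hence $\fpt(f)\geq 1-\frac{1}{p}$. To get this I would show $f^{(p-1)^2}\notin\fm^{[p^2]}$, which yields $\fpt(f)\geq\nu_f(p^2)/p^2\geq (p-1)^2/p^2=1-\frac{2}{p}+\frac{1}{p^2}>1-\frac{2}{p}$. Writing $(p-1)^2=(p-2)p+1$, choose nonnegative integers $c_1,\dots,c_n$ with $\sum_i c_i=p-2$ and $c_i\leq\lfloor p/d\rfloor$ for all $i$, and set $j_1=pc_1+1$ and $j_i=pc_i$ for $i\geq 2$. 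Then $\sum_i j_i=(p-1)^2$; one checks $dj_i<p^2$ for every $i$ (for $i\geq 2$ using $d\lfloor p/d\rfloor\leq p-1$, valid since $d\nmid p$; for $i=1$ using $d<p$); and the base-$p$ digits of the $j_i$ add to those of $(p-1)^2$ with no carries (units place: a single $1$; $p$'s place: $\sum_i c_i=p-2<p$; nothing higher), so by Lucas' congruence for multinomial coefficients the coefficient of $\prod_i x_i^{dj_i}$ in $f^{(p-1)^2}$ is a nonzero element of $k$. Hence $f^{(p-1)^2}\notin\fm^{[p^2]}$.

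It remains to verify that such $c_i$ exist, i.e. that $n\lfloor p/d\rfloor\geq p-2$. Since $d\leq n-1$ it is enough to prove $n\lfloor p/(n-1)\rfloor\geq p-2$, and here I would use the identity $p=(n-1)a+(a+2)$ (from $na=p-2$): if $a+2\leq n-2$ then $\lfloor p/(n-1)\rfloor=a$ and $n\lfloor p/(n-1)\rfloor=na=p-2$; if $a+2\geq n$ then $\lfloor p/(n-1)\rfloor\geq a+1$ and $n\lfloor p/(n-1)\rfloor\geq n(a+1)>p-2$; and the case $a+2=n-1$ is impossible, as it would force $p=(n-1)(a+1)$ with $p$ prime and $2\leq n-1\leq p-3$. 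This combinatorial step is the only delicate point, and it comes with a genuine pitfall worth flagging in the write-up: one must use the exponent $(p-1)^2$ at level $p^2$, because the more obvious choice $f^{p^2-p}\notin\fm^{[p^2]}$ requires $n\lfloor p/d\rfloor\geq p-1$, which really does fail (for instance when $p=7$, $n=5$, $d=4$, where in fact $\nu_f(p^e)=p^e-p^{e-1}-1$). Everything else is bookkeeping.

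For the equality under the extra hypothesis $d\geq\lceil p/(a+1)\rceil$: since also $n=(p-2)/a<(p-1)/a$, Lemma~\ref{lem: high degree} applied with this $a$ shows $R$ is not $F$-pure, so $h\geq 1$; combined with $h\leq 1$ from the first part this gives $h=1$, i.e. $\fpt(f)=1-\frac{1}{p}$.
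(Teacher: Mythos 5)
Your proof is correct and follows the same skeleton as the paper's: bound $\nu_f(p^2)$ from below by exhibiting a non-vanishing monomial, feed the resulting strict inequality $\fpt(f) > 1 - \tfrac{2}{p}$ into \cite[Theorem 3.5(2)]{HernandezBetancourtWittEmilyZhang_FPureThresholdsofHomogeneousPolynomials} to force $\fpt(f) = 1 - \tfrac{h}{p}$ with $h \leq 1$, then use Lemma~\ref{lem: high degree} for the reverse inequality. Where you differ is in the level of detail: the paper reduces to $d = n-1$ by monotonicity of $\fpt$, works at arbitrary $e \gg 0$, and simply asserts that the coefficient $c_j$ of its key monomial is a nonzero element of $k$; you instead work directly at $e = 2$ for general $d < n$, and you actually verify the non-vanishing via Lucas on the multinomial coefficient, together with the elementary check that $n\lfloor p/d\rfloor \geq p-2$ so the needed tuple $(c_1,\dots,c_n)$ exists. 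That verification is the real content of the step (the paper's $c_j \neq 0$ is not obvious without some form of Lucas/Kummer), so your write-up is strictly more complete. Your flagged pitfall --- that $f^{p^2-p} \in \fm^{[p^2]}$ can actually happen, e.g.\ $(p,n,d)=(7,5,4)$, so one must use the exponent $(p-1)^2$ rather than $p^2-p$ --- is accurate and worth retaining. Two cosmetic remarks: you should say $d>1$ up front (as in Setting~\ref{setting: FermatHypersurfaces char>2}), since $d\nmid p$ and the isolated-singularity hypothesis for HBWZ both use it (the $d=1$ case is trivially $\fpt=1$); and note that for a single choice $c_i=a$ for all $i$ your monomial coincides exactly with the paper's $M_1$ when $d=n-1$, so the two constructions literally agree in that case.
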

\begin{proof}
    We claim that for all $e \gg 0$,
    $$\nu_f(p^e) \geq p^e - 2p^{e-1} + 1$$
    If this the claim is true, then $\fpt(f) \geq 1 - \frac{2}{p} + \frac{1}{p^e}$. As $p = an + 2 > n - 2$ for $a \geq 1$, via \cite{HernandezBetancourtWittEmilyZhang_FPureThresholdsofHomogeneousPolynomials}, we can conclude that $\fpt(f) \geq 1 - \frac{h}{p}$ for some $h \geq 0$. From the above bound, it follows that $\fpt(f) \geq 1 -  \frac{1}{p}$. If $d \geq \left\lceil \frac{p}{a+1}\right\rceil$ then $R$ is not $F$-pure by Lemma \ref{lem: high degree}; it follows that $\fpt(f) = 1 - \frac{1}{p}$ in this case as desired. \\

   We now prove the claim. We've assumed that $d < n$, so it suffices to show this holds for the case $d = n- 1 = \frac{p-2}{a}-1$, as for any lower $d$ the $F$-pure threshold would only increase. It is thus sufficient to show that
    $$\left(x_1^{\frac{p-2}{a}-1} + \dots + x_{\frac{p-2}{a}}^{\frac{p-2}{a}-1} \right)^{p^e - 2p^{e-1} + 1} \not\in \fm^{[p^e]}$$
    Rewriting this polynomial as 
    $$\left(x_1^{\frac{p-2}{a}-1} + \dots + x_{\frac{p-2}{a}}^{\frac{p-2}{a}-1} \right)^{p^{e-1}(p-2)}\left(x_1^{\frac{p-2}{a}-1} + \dots + x_{\frac{p-2}{a}}^{\frac{p-2}{a}-1} \right)$$
    We see this polynomial contains the monomials
    $$M_{j} := c_j\cdot \left(x_1\cdot x_2 \cdot \dots \cdot x_n\right)^{\left(\frac{p-2}{a}-1\right)(p^{e-1})a} \cdot x_{j}^{\frac{p-2}{a}-1}$$
    For $0 \neq c_j \in k$ and each $j = 1, \dots, n = \frac{p-2}{a}$. For simplicity we look at $M_1$. Note that $\deg_{x_1}(M_1) = \deg_{x_j}(M_1) + \frac{p-2}{a}- 1$ for each $j > 1$, so it is sufficient to show that $\deg_{x_1}(M_1) < p^e$ to conclude that $M_1 \not\in \fm^{[p^e]}$, which would prove the claim. 
    \begin{flalign*}
        \deg_{x_1}(M_1) &= \left(\frac{p-2}{a}-1\right)(p^{e-1})a + \frac{p-2}{a}-1 \\
        &= p^{e-1}(p-2-a) + \frac{p-2}{a}-1 \\
        &= p^e - (a+2)p^{e-1} + \frac{p-2}{a}-1
    \end{flalign*}
    For any $e$ chosen sufficiently large (possibly dependent on $a$) and fixed $a \geq 1$ , this is less than $p^e$. 
\end{proof}
The first example of such an $f$ is of the form $(n,d,p) = (5,4,7)$. A Julia computation via the methods of \cite{BatubaraGarzellaPan_K3SurfacesofanyArtinMazurHeightoverF5andF7} shows that this is quasi-$F$-pure:
\begin{exam}
    \label{exam:fermat7:qfs}
    The fermat quartic threefold over \(\mathbb{F}_{7}\) has quasi-\(F\)-pure
    height 2.
\end{exam}
The next most simple examples exist in characteristic $11$, specifically 
$$(n,d,p) = (9,6,11),(9,7,11),(9,8,11)$$
but we are unable to compute the quasi-$F$-pure height of these examples due to limits on computing capacity.

\subsection{Questions on Quartic Threefolds}

Example \ref{exam:fermat7:qfs} is interesting in that it is an ``unlikely intersection'' of the equations defining the non-$F$-split locus. We now explain this more precisely. \\

\newcommand{\wics}{\operatorname{wics}}

We conjecture the type of each of the unknown cases for $n=5$, $d=4$. To do this, we recall a few facts about the moduli of hypersurfaces. One may compute a presentation for the moduli of projective hypersurfaces
using a general recipe.
\begin{definition}
    A \textit{weak integer composition}
    of length \(n\) and degree \(d\) is a
    \(n\)-tuple \((a_{1}, \ldots, a_{n})\) 
    of nonnegative integers
    such that \(a_{1} + \ldots + a_{n} = d\).
    We denote the set of weak integer compositions
    of length \(n\) and degree \(d\) by 
    \(\wics(n,d)\)
\end{definition}
The set \(\wics(n,d)\) is in bijection with the set of 
homogeneous monomials of degree \(d\) in \(n\) variables,
by raising the \(i\)-th variable to the \(a_{i}\)-th power. To construct the moduli of hypersurfaces
in \(\mathbb{P}^{n-1}\) and degree \(d\),
we may consider the projective space parameterizing the 
coefficients of a generic homogeneous polynomial of
degree \(d\) in \(n\) variables, i.e. 
\(\mathbb{P}^{\#\wics(n,d)}_{k}\).
Then, we remove the locus cut out by the Jacobian ideal
to get a space parameterizing smooth hypersurfaces. 
Finally, we quotient by the group \(\PGL_{n}\) of
changes of variables.
As \(\PGL_{n}\) has dimension \(n^{2}-1\) 
and \(\#\wics(n,d) = \binom{d + n - 1}{n-1}\),
we see that the dimension of this moduli space
is  \(\binom{d + n - 1}{n-1} - n^2 + 1\). \\

We can also determine the non-\(F\)-split locus:
if one takes general polynomial 
(in $\binom{d + n - 1}{n-1}$ variables),
raises it to the \(p-1\)-th power,
then by Fedder's criterion 
the non-\(F\)-split locus is given by the coefficients
of the monomials which do not have a \(p\)-th power
of any variable.

\begin{defn}
    Let \(\wics(n,d)_{<k}\) denote the 
    subset of \(\wics(n,d)\) for which all
    of the parts \(a_{1}, \ldots, a_{n}\) in the weak integer composition are
    strictly less than \(k\).
\end{defn}

The previous discussion gives

\begin{prop}
    The non-\(F\)-split locus is cut out by 
    \(\#\wics(n,d(p-1))_{<p}\) equations.
\end{prop}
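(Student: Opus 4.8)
The plan is to rewrite Fedder's criterion in coordinates on the parameter space of degree-$d$ forms and read off exactly which coefficient equations it produces. Using the bijection between $\wics(n,d)$ and degree-$d$ monomials in $x_1,\dots,x_n$, write a generic form as $f = \sum_{\alpha\in\wics(n,d)} c_\alpha x^{\alpha}$ and regard the $c_\alpha$ as the homogeneous coordinates on $\PP^{\#\wics(n,d)}_{k}$. Expanding by the multinomial theorem gives $f^{p-1} = \sum_{\beta\in\wics(n,d(p-1))} P_\beta(c)\,x^{\beta}$, where each $P_\beta$ is a form of degree $p-1$ in the variables $c_\alpha$ with coefficients in the prime field (explicitly, a sum of multinomial coefficients times monomials in the $c_\alpha$). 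Rescaling the $c_\alpha$ rescales every $P_\beta$ by the same factor, so $\{P_\beta = 0\}$ is a well-defined closed subscheme of $\PP^{\#\wics(n,d)}_{k}$, and the whole expansion is ``universal'': it depends only on $n$, $d$, $p$, not on the particular hypersurface.

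Next I would invoke Fedder's criterion for the hypersurface $A/(f)$ with $A = k[x_1,\dots,x_n]$ and $\fm = (x_1,\dots,x_n)$: the ring $A/(f)$ is $F$-split if and only if $f^{p-1}\notin\fm\bp = (x_1^{p},\dots,x_n^{p})$. The point is that $\fm\bp$ is a monomial ideal, so $f^{p-1}\in\fm\bp$ precisely when every monomial occurring in $f^{p-1}$ lies in $\fm\bp$; and a monomial $x^{\beta}$ of degree $d(p-1)$ lies in $\fm\bp$ exactly when some exponent $\beta_i\geq p$, i.e. exactly when $\beta\notin\wics(n,d(p-1))_{<p}$. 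Hence $f^{p-1}\in\fm\bp$ if and only if $P_\beta(c)=0$ for every $\beta\in\wics(n,d(p-1))_{<p}$: the terms indexed by the complement contribute nothing to membership in $\fm\bp$, while the terms indexed by $\wics(n,d(p-1))_{<p}$ must all drop out.

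Combining the two, the non-$F$-split locus in $\PP^{\#\wics(n,d)}_{k}$ is exactly the common vanishing locus of the family $\{P_\beta : \beta\in\wics(n,d(p-1))_{<p}\}$, and this family has $\#\wics(n,d(p-1))_{<p}$ members, which is the proposition. Since $F$-splitting is unaffected by a linear change of variables, this locus is $\PGL_n$-invariant, so the same count of defining equations descends to the moduli space after removing the Jacobian locus and passing to the quotient.

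I do not anticipate a genuine obstacle: the entire content is Fedder's criterion reorganized monomial by monomial. The step requiring the most care is simply the bookkeeping that each $P_\beta$ is honestly a polynomial in the $c_\alpha$ coming from the universal expansion of $(\sum c_\alpha x^\alpha)^{p-1}$, together with the $\PGL_n$-invariance used to transport the statement to the moduli quotient. As an optional remark one can note that, since the exponents appearing in each term of $P_\beta$ are all strictly less than $p$, Wilson's theorem shows no monomial in any $P_\beta$ has coefficient divisible by $p$; this is not needed for the stated proposition but shows the defining equations are nontrivial whenever the corresponding multinomial expansion is nonempty.
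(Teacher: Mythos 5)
Your proof is correct and is essentially the paper's argument in expanded form: expand $f^{p-1}$ over the coefficient variables, invoke Fedder's criterion, and use that $\fm^{[p]}$ is a monomial ideal to reduce membership of $f^{p-1}$ to the vanishing of exactly those coefficient polynomials $P_\beta$ indexed by $\beta\in\wics(n,d(p-1))_{<p}$. The paper compresses this into the sentence preceding the proposition (``the non-$F$-split locus is given by the coefficients of the monomials which do not have a $p$-th power of any variable''), and your remark on nonvanishing of the multinomial coefficients is a correct, if extraneous, addition.
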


The authors do not know a combinatorial expression for the quantity
\(\#\wics(n,d(p-1))_{<p}\), but it is easily computable for small inputs. In the case of a quartic threefold (\(n = 5, d = 4\)), we have
that the dimension of the moduli space is 46.
In this case, if \(11 \leq p\), then by
\cite[Theorem~3.12]{HernandezBetancourtWittEmilyZhang_FPureThresholdsofHomogeneousPolynomials} all quartic threefolds are \(F\)-split.
For \(p=2,3,5\) and \(7\), the number of equations
that cut out the non-\(F\)-split locus is \(5, 15, 70,\) and \(210\) 
respectively. \\

Notice that for \(p=5,7\), the number of equations cutting out
the non-\(F\)-split locus is larger than the dimension of the moduli
space. Thus, one may expect that all such hypersurfaces are \(F\)-split.
However, this turns out to be too much to hope for: Example \ref{exam:fermat7:qfs} is an ``unlikely intersection'' of the equations
which define the non-\(F\)-split locus. This contrasts the \(p=2\) or \(3\) case where one knows that non-\(F\)-split surfaces exist for moduli reasons (i.e. the non-\(F\)-split locus is not cut out by enough equations to vanish completely). \\

Intuitively, it's possible for such a hypersurface in characteristic \(7\) 
to be non-\(F\)-split because each monomial is very ``tall'' (i.e. each monomial has a single variable of a high power) rather than ``wide'' (i.e. a monomial having low powers of many variables) However, the Fedder-type criterion for quasi-\(F\)-purity ``mixes up'' the variables. Motivated by this intuition we suggest the following question:

\begin{quest}
    Let \(X\) be a projective hypersurface of degree
    \(d\) in \(\mathbb{P}^{n-1}\) over \(\mathbb{F}_{p}\).
    Assume that 
    \[\binom{d + n - 1}{n-1} - n^2 + 1 < \#\wics(n,d(p-1))_{<p}\]
    Then is \(X\) always quasi-\(F\)-split?
\end{quest}
If true, it would imply the following corollary:
\begin{cor}
    Let \(X\) be a quartic threefold over \(\mathbb{F}_{5}\) or \(\mathbb{F}_{7}\).
    Then \(X\) is quasi-\(F\)-split.
\end{cor}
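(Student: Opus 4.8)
The plan is to obtain the corollary directly from an affirmative answer to the preceding question, so that the only work is to verify the numerical inequality $\binom{d+n-1}{n-1} - n^2 + 1 < \#\wics(n,d(p-1))_{<p}$ in the two cases $(n,d,p) = (5,4,5)$ and $(5,4,7)$. For a quartic threefold one has $n = 5$ and $d = 4$, so the left-hand side — the dimension of the moduli space — is $\binom{8}{4} - 5^2 + 1 = 70 - 24 = 46$, independent of $p$.

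For the right-hand side I would evaluate $\#\wics(5,d(p-1))_{<p}$ as the coefficient of $x^{d(p-1)}$ in $\bigl((1-x^{p})/(1-x)\bigr)^{5}$ via inclusion–exclusion. When $p = 5$ this is the coefficient of $x^{16}$, equal to $\sum_{k=0}^{3}(-1)^{k}\binom{5}{k}\binom{20-5k}{4} = 70$; when $p = 7$ it is the coefficient of $x^{24}$, equal to $\sum_{k=0}^{3}(-1)^{k}\binom{5}{k}\binom{28-7k}{4} = 210$. These agree with the counts $70$ and $210$ of equations cutting out the non-$F$-split locus already recorded above. Since $46 < 70$ and $46 < 210$, the hypothesis of the question is met in both characteristics, and the affirmative answer then gives that every quartic threefold over $\mathbb{F}_{5}$ or $\mathbb{F}_{7}$ is quasi-$F$-split. (For $p \geq 11$ all quartic threefolds are already $F$-split, and for $p = 2,3$ the inequality fails, consistent with the existence of non-$F$-split quartic threefolds there, so $p = 5,7$ are exactly the interesting cases.)

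Because the corollary is explicitly conditional on the open question, the deduction has no real obstacle — it is entirely combinatorial bookkeeping — and all the difficulty is packed into the question itself. If one wanted an unconditional statement for these two cases, the natural route would instead be the quasi-Fedder criterion of Theorem \ref{thm: FeddersTypeCrit}: one would need to show that for every smooth quartic $f$ the increasing chain of ideals $\{I_{m}\}$ escapes $\fm^{[p]}$ at some finite stage. The hard part there is uniformity across the $46$-dimensional family of smooth quartics; because the maps $\theta$ and $\Delta_{1}$ mix the variables, the clean monomial-counting available in the $F$-split case (Lemmas \ref{lem: low degree} and \ref{lem: high degree}) breaks down, and one would likely be forced into a stratification or degeneration argument, or an exhaustive GPU-assisted computation in the spirit of \cite{BatubaraGarzellaPan_K3SurfacesofanyArtinMazurHeightoverF5andF7}.
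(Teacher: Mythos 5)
Your proposal is correct and follows the paper's intended (conditional) deduction: the paper records that the moduli dimension is $46$ while the non-$F$-split locus is cut out by $70$ and $210$ equations for $p=5,7$ respectively, so the hypothesis of the preceding question holds and the corollary follows. You merely spell out the inclusion–exclusion computation of $\#\wics(5,4(p-1))_{<p}$ that the paper leaves implicit; this is the same route.
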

The authors do not know of a single example
of a quartic threefold in characteristic \(5\) which is not
\(F\)-split. In contrast, by corollary \ref{cor: QuasiFedderCriterionNotQFPure}, the Fermat quartic threefold over \(\mathbb{F}_{3}\) is not quasi-$F$-split. 



\bibliographystyle{amsalpha}
\bibliography{references,preprints,MainBib}

\newcommand{\etalchar}[1]{$^{#1}$}
\def\cfudot#1{\ifmmode\setbox7\hbox{$\accent"5E#1$}\else \setbox7\hbox{\accent"5E#1}\penalty 10000\relax\fi\raise 1\ht7 \hbox{\raise.1ex\hbox to 1\wd7{\hss.\hss}}\penalty 10000 \hskip-1\wd7\penalty 10000\box7}
\providecommand{\bysame}{\leavevmode\hbox to3em{\hrulefill}\thinspace}
\providecommand{\MR}{\relax\ifhmode\unskip\space\fi MR }
\providecommand{\MRhref}[2]{%
  \href{http://www.ams.org/mathscinet-getitem?mr=#1}{#2}
}
\providecommand{\href}[2]{#2}
\begin{thebibliography}{HNnBWZ16}

\bibitem[BGP25]{BatubaraGarzellaPan_K3SurfacesofanyArtinMazurHeightoverF5andF7}
Ryan Batubara, Jack~J Garzella, and Alex Pan, \emph{K3 surfaces of any artin-mazur height over $\mathbb{F}_5$ and $\mathbb{F}_7$ via quasi-$f$-split singularities and gpu acceleration}, 2025.

\bibitem[BMS06]{BudurMustataSaitoBernsteinSatoPolynomialsOfArbitrary}
Nero Budur, Mircea Musta{\c{t}}{\v{a}}, and Morihiko Saito, \emph{Bernstein-{S}ato polynomials of arbitrary varieties}, Compos. Math. \textbf{142} (2006), no.~3, 779--797. \MR{MR2231202 (2007c:32036)}

\bibitem[BMS08]{BlickleMustataSmithDiscretenessAndRationalityOfFThresholds}
Manuel Blickle, Mircea Musta{\c{t}}{\v{a}}, and Karen~E. Smith, \emph{Discreteness and rationality of {$F$}-thresholds}, Michigan Math. J. \textbf{57} (2008), 43--61, Special volume in honor of Melvin Hochster. \MR{2492440 (2010c:13003)}

\bibitem[BMS09]{BlickleMustataSmithFThresholdsOfHypersurfaces}
Manuel Blickle, Mircea Musta{\c{t}}{\u{a}}, and Karen~E. Smith, \emph{{$F$}-thresholds of hypersurfaces}, Trans. Amer. Math. Soc. \textbf{361} (2009), no.~12, 6549--6565. \MR{2538604 (2011a:13006)}

\bibitem[Bor11]{Borger}
James Borger, \emph{The basic geometry of {W}itt vectors, {I}: {T}he affine case}, Algebra Number Theory \textbf{5} (2011), no.~2, 231--285. \MR{2833791}

\bibitem[BS15]{BhattSinghThresholds}
Bhargav Bhatt and Anurag~K. Singh, \emph{The {$F$}-pure threshold of a {C}alabi-{Y}au hypersurface}, Math. Ann. \textbf{362} (2015), no.~1-2, 551--567. \MR{3343889}

\bibitem[DK14]{DavisKedlaya}
Christopher Davis and Kiran~S. Kedlaya, \emph{On the {W}itt vector {F}robenius}, Proc. Amer. Math. Soc. \textbf{142} (2014), no.~7, 2211--2226. \MR{3195748}

\bibitem[DM23]{DattaMurayamaTate}
Rankeya Datta and Takumi Murayama, \emph{Tate algebras and {F}robenius non-splitting of excellent regular rings}, J. Eur. Math. Soc. (JEMS) \textbf{25} (2023), no.~11, 4291--4314. \MR{4662293}

\bibitem[DN15]{DeStefaniNunezBetancourtFthresholdGradedrings}
Alessandro {De Stefani} and Luis {N\'{u}\~{n}ez-}{Betancourt}, \emph{{$F$}-threshold of graded rings}, arXiv: 1507.05459.

\bibitem[Fed83]{FeddersCrit}
Richard Fedder, \emph{{$F$}-purity and rational singularity}, Trans. Amer. Math. Soc. \textbf{278} (1983), no.~2, 461--480. \MR{701505}

\bibitem[FGM{\etalchar{+}}]{fgmqt-2025-witt-vectors-macaulay2}
Anne Fayolle, Abhay Goel, Devlin Mallory, Eamon Quinlan-Gallego, and Teppei Takamatsu, \emph{The witt vectors package for {Macaualay2}}, In preparation.

\bibitem[Her11a]{HernandezFPureThresholdOfBinomial}
Daniel Hern{\'a}ndez, \emph{F-pure thresholds of binomial hypersurfaces}, arXiv:1112.2427.

\bibitem[Her11b]{HernandezThesis}
Daniel Hern\'andez, \emph{$f$-purity for hypersurfaces}, Ph.D. thesis, University of Michigan, 2011.

\bibitem[Her15]{HernandezFInvariantsOfDiagonalHyp}
Daniel~J. Hern\'{a}ndez, \emph{{$F$}-invariants of diagonal hypersurfaces}, Proc. Amer. Math. Soc. \textbf{143} (2015), no.~1, 87--104. \MR{3272734}

\bibitem[HMTW08]{HunekeMustataTakagiWatanabeFThresholdsTightClosureIntClosureMultBounds}
Craig Huneke, Mircea Musta{\c{t}}{\u{a}}, Shunsuke Takagi, and Kei-ichi Watanabe, \emph{F-thresholds, tight closure, integral closure, and multiplicity bounds}, Michigan Math. J. \textbf{57} (2008), 463--483, Special volume in honor of Melvin Hochster. \MR{MR2492463}

\bibitem[HNnBWZ16]{HernandezBetancourtWittEmilyZhang_FPureThresholdsofHomogeneousPolynomials}
Daniel~J. Hern\'andez, Luis N\'u\~nez Betancourt, Emily~E. Witt, and Wenliang Zhang, \emph{{$F$}-pure thresholds of homogeneous polynomials}, Michigan Math. J. \textbf{65} (2016), no.~1, 57--87. \MR{3466816}

\bibitem[HSTW21]{FrobeniusThresholdsMC2}
Daniel~J. Hern\'andez, Karl Schwede, Pedro Teixeira, and Emily~E. Witt, \emph{The {F}robenius{T}hresholds package for {M}acaulay2}, J. Softw. Algebra Geom. \textbf{11} (2021), no.~1, 25--39. \MR{4285762}

\bibitem[HW02]{HaraWatanabeFRegFPure}
Nobuo Hara and Kei-Ichi Watanabe, \emph{F-regular and {F}-pure rings vs. log terminal and log canonical singularities}, J. Algebraic Geom. \textbf{11} (2002), no.~2, 363--392. \MR{MR1874118 (2002k:13009)}

\bibitem[Jag25]{Jagathese_OnQuasiFPurityofExcellentRings}
Vignesh Jagathese, \emph{On quasi-f-purity of excellent rings}, 2025.

\bibitem[KKP{\etalchar{+}}22]{KadyrsizovaKenkelPageSinghSmithVraciuWittLowerBoundsonFPureThresholdsandExtremalSingularities}
Zhibek Kadyrsizova, Jennifer Kenkel, Janet Page, Jyoti Singh, Karen~E. Smith, Adela Vraciu, and Emily~E. Witt, \emph{Lower bounds on the {$F$}-pure threshold and extremal singularities}, Trans. Amer. Math. Soc. Ser. B \textbf{9} (2022), 977--1005. \MR{4498775}

\bibitem[KTT{\etalchar{+}}22]{Kawakami++_2022quasifsplittingsbirationalgeometry}
Tatsuro Kawakami, Teppei Takamatsu, Hiromu Tanaka, Jakub Witaszek, Fuetaro Yobuko, and Shou Yoshikawa, \emph{Quasi-f-splittings in birational geometry}.

\bibitem[KTT{\etalchar{+}}24a]{Kawakami++_2022quasifsplittingsbirationalgeometryII}
Tatsuro Kawakami, Teppei Takamatsu, Hiromu Tanaka, Jakub Witaszek, Fuetaro Yobuko, and Shou Yoshikawa, \emph{Quasi-{$F$}-splittings in birational geometry {II}}, Proc. Lond. Math. Soc. (3) \textbf{128} (2024), no.~4, Paper No. e12593, 81. \MR{4731853}

\bibitem[KTT{\etalchar{+}}24b]{Kawakami++_2022quasifsplittingsbirationalgeometryIII}
Tatsuro Kawakami, Teppei Takamatsu, Hiromu Tanaka, Jakub Witaszek, Fuetaro Yobuko, and Shou Yoshikawa, \emph{Quasi-{$F$}-splittings in birational geometry iii}.

\bibitem[KTY22]{kawakami2022fedder}
Tatsuro {Kawakami}, Teppei {Takamatsu}, and Shou {Yoshikawa}, \emph{{Fedder type criteria for quasi-$F$-splitting}}, arXiv e-prints (2022), arXiv:2204.10076.

\bibitem[Len19]{LenstraWittVectors}
Hendrik~W. Lenstra, \emph{Construction of the ring of {W}itt vectors}, Eur. J. Math. \textbf{5} (2019), no.~4, 1234--1241. \MR{4015455}

\bibitem[LZ04]{LangerZink}
Andreas Langer and Thomas Zink, \emph{De {R}ham-{W}itt cohomology for a proper and smooth morphism}, J. Inst. Math. Jussieu \textbf{3} (2004), no.~2, 231--314. \MR{2055710}

\bibitem[MTW05]{MustataTakagiWatanabeFThresholdsAndBernsteinSato}
Mircea Musta{\c{t}}{\v{a}}, Shunsuke Takagi, and Kei-ichi Watanabe, \emph{F-thresholds and {B}ernstein-{S}ato polynomials}, European Congress of Mathematics, Eur. Math. Soc., Z\"urich, 2005, pp.~341--364. \MR{MR2185754 (2007b:13010)}

\bibitem[NY21]{NakkajimaYobukoKodaira}
Yukiyoshi Nakkajima and Fuetaro Yobuko, \emph{Degenerations of log {H}odge de {R}ham spectral sequences, log {K}odaira vanishing theorem in characteristic {$p>0$} and log weak {L}efschetz conjecture for log crystalline cohomologies}, Eur. J. Math. \textbf{7} (2021), no.~4, 1537--1615. \MR{4340947}

\bibitem[Pet25]{Petrov_DecompositionofDeRhamComplexForQuasiFSplitVarieties}
Alexander Petrov, \emph{Decomposition of de rham complex for quasi-f-split varieties}.

\bibitem[TW04]{TakagiWatanabe_OnFPureThresholds}
Shunsuke Takagi and Kei-ichi Watanabe, \emph{On {F}-pure thresholds}, J. Algebra \textbf{282} (2004), no.~1, 278--297. \MR{2097584}

\bibitem[TWY24]{tanaka2024quasifesplittings}
Hiromu Tanaka, Jakub Witaszek, and Fuetaro Yobuko, \emph{Quasi-$f^e$-splittings and quasi-$f$-regularity}.

\bibitem[vdGK02]{vdG-Katsura-2002-invariant}
G.~van~der Geer and T.~Katsura, \emph{An invariant for varieties in positive characteristic}, Algebraic number theory and algebraic geometry, Contemp. Math., vol. 300, Amer. Math. Soc., Providence, RI, 2002, pp.~131--141. \MR{1936370}

\bibitem[Yob19]{Yobuko_QuasiFrobeniusSplittingandliftingofCYVarsInCharp}
Fuetaro Yobuko, \emph{Quasi-{F}robenius splitting and lifting of {C}alabi-{Y}au varieties in characteristic {$p$}}, Math. Z. \textbf{292} (2019), no.~1-2, 307--316. \MR{3968903}

\bibitem[Yob20]{Yobuko_OnTheFrobeniusSplittingHeightofVarsinPositiveChar}
\bysame, \emph{On the {F}robenius-splitting height of varieties in positive characteristic}, Algebraic number theory and related topics 2016, RIMS K\^oky\^uroku Bessatsu, vol. B77, Res. Inst. Math. Sci. (RIMS), Kyoto, 2020, pp.~159--175. \MR{4278982}

\bibitem[Yob23]{Yobuko_QuasiFSplitandHodgeWitt}
Fuetaro Yobuko, \emph{Quasi-f-split and hodge-witt}, 2023.

\bibitem[Yos25]{yoshikawa2025feddertypecriterionquasifesplittingquasifregularity}
Shou Yoshikawa, \emph{Fedder-type criterion for quasi-$f^e$-splitting and quasi-$f$-regularity}, 2025.

\end{thebibliography}

\end{document}